\numberwithin{equation}{section}
\theoremstyle{plain}
\newtheorem{theorem}[equation]{Theorem}
\newtheorem{proposition}[equation]{Proposition}
\newtheorem{lemma}[equation]{Lemma}
\newtheorem{corollary}[equation]{Corollary}
\theoremstyle{remark}
\theoremstyle{definition}
\newtheorem{definition}[equation]{Definition}
\newtheoremstyle{citing}
  {3pt}
  {3pt}
  {\itshape}
  {}
  {\bfseries}
  {.}
  {.5em}
  {\thmnote{#3}}
\theoremstyle{citing}
\newtheorem*{varthm}{}
\newcommand{\abs}[1]{\lvert#1\rvert}
\newcommand{\ra}{\rightarrow}
\newcommand{\restr}{\mbox{\Large \(|\)\normalsize}}
\newcommand{\A}{{\mathcal A}}
\newcommand{\C}{{\mathcal C}}
\newcommand{\F}{{\mathcal F}}
\newcommand{\N}{\mathbb N}
\newcommand{\Q}{\mathbb Q}
\newcommand{\R}{\mathbb R}
\newcommand{\U}{{\mathcal U}}
\newcommand{\ind}{\operatorname{Ind}}
\newcommand{\lip}{\operatorname{lip}}
\newcommand{\Lip}{\operatorname{Lip}}
\newcommand{\LIP}{\operatorname{LIP}}
\newcommand{\var}{\operatorname{var}}
\newcommand{\al}{\alpha}
\def\eps{\epsilon}
\def\ga{\gamma}
\def\la{\lambda}
\def\si{\sigma}
\def\defeq{:=}
\def\Xint#1{\mathchoice
   {\XXint\displaystyle\textstyle{#1}}%
   {\XXint\textstyle\scriptstyle{#1}}%
   {\XXint\scriptstyle\scriptscriptstyle{#1}}%
   {\XXint\scriptscriptstyle\scriptscriptstyle{#1}}%
   \!\int}
\def\XXint#1#2#3{{\setbox0=\hbox{$#1{#2#3}{\int}$}
     \vcenter{\hbox{$#2#3$}}\kern-.5\wd0}}
\def\av{\Xint-}
\def\dashint{\Xint-}
\begin{document}

\title[Primer on Differentiable structures]
{Differentiable structures on metric measure spaces: A Primer}
\author{Bruce Kleiner}
\address{Courant Institute of Mathematical Sciences\\251 Mercer Street\\New York, NY 10012,USA}
\email{bkleiner@cims.nyu.edu}
\thanks{Supported by NSF Grant DMS 0701515}
\author{John Mackay}
\address{Mathematical Institute, 24-29 St Giles'\\Oxford OX1 3LB, UK}
\email{john.mackay@maths.ox.ac.uk}
\date{\today}
\maketitle

\tableofcontents

\section{Introduction}

\subsection{Overview}
A key result of geometric function theory is 
Rademacher's theorem: any real-valued Lipschitz function on
$\R^n$ is differentiable almost everywhere.
In~\cite{Ch99}, Cheeger found a 
far-reaching
 generalization of this
result in the context of doubling metric measure spaces 
that satisfy a Poincar\'e inequality.  The goal of this primer
is to give a 
streamlined
 account of the construction of a 
measurable differentiable structure on such spaces, in the hopes of
providing an accessible introduction to this area of active research.
Our exposition is based on Cheeger's work, 
and incorporates a number of
clarifications due to Keith~\cite{KeDS}, as well as a few
of our own.  

The scope of this primer is limited to the foundational results
obtained in the first part of Cheeger's paper.
For a broader discussion of the historical and mathematical
context of this result, we refer the reader to the aforementioned
papers and to the survey of Heinonen~\cite{Hein}.

One of Cheeger's first achievements was to see that it is possible
to define a notion of differentiability in a metric space without any
additional algebraic structure.
A real valued function $f: \R^n \ra \R$ is differentiable at a point $x_0$ if
there is a some linear combination $L$ of 
the coordinate functions $x_i: \R^n \ra \R$, $i = 1, \ldots, n$, so
that the behavior of $f$ and $L$ near $x_0$ agree up to first order.
In other words, $f(\cdot)-f(x_0) = L(\cdot) - L(x_0) + o(d(\cdot,x_0))$.
Cheeger observed that this definition of differentiability 
with respect to 
a set of coordinate functions
 makes sense for real valued
functions on general metric measure spaces, 
where the 
role of the
coordinate functions is played by  suitable tuples of real valued Lipschitz functions.
Cheeger's version of Rademacher's theorem for metric measure spaces
asserts  that  there is a countable, full measure, disjoint collection of measurable subsets
equipped with  coordinate functions,
so that every Lipschitz function is differentiable almost everywhere
with respect to  the corresponding coordinate functions.

Of course, there is no reason for this conclusion to hold for every metric measure space.
Following Cheeger and Keith, we will show that it does hold when the space admits a $p$-Poincar\'e inequality.
%

Throughout this paper, $X = (X,d,\mu)$ denotes a metric measure space
with metric $d$ and measure $\mu$.  We will assume that
$\mu$ is Borel regular and doubling: there exists some constant $C$ so that 
for every $x \in X$ and $r>0$, $\mu(B(x,2r)) \leq C \mu(B(x,r))$.

\subsection{Statement of the theorem}

The discussion of differentiability given in the overview is formalized by the
following definition.

\begin{definition}[Cheeger, Keith]\label{def-diff-struct}
A {\em measurable differentiable structure} on a 
metric measure space $(X,d,\mu)$ is a countable collection of pairs 
$\{(X_\alpha,\mathbf{x}_\alpha)\}$, called {\em coordinate patches}, 
that satisfy the following conditions:
	
\begin{enumerate}
\item Each $X_\alpha$ is a measurable subset of $X$
with positive measure, and the union of the $X_\alpha$'s
has full $\mu$-measure in $X$.

\item Each $\mathbf{x}_\alpha$ is a $N(\alpha)$-tuple 
of Lipschitz functions
on $X$, for some $N(\alpha) \in \N$, where 
$N(\alpha)$ is bounded from above independently of $\alpha$.
The maximum of all the $N(\alpha)$ is called the {\em 
dimension} of the differentiable structure.
	
\item For each $\alpha$, $\mathbf{x}_\alpha = (x_\alpha^1, 
\ldots, x_\alpha^{N(\alpha)})$ \emph{spans the differentials 
almost everywhere for $X_\alpha$}, in the following sense:
For every  Lipschitz function $f:X \rightarrow \R$, there exists
a measurable function $df^\alpha: X_\alpha \rightarrow
 \R^{N(\alpha)}$ so that for $\mu$-a.e. $x \in X$,
\begin{equation}\label{eq-df}
	\limsup_{y \ra x} \frac{\left| f(y)-f(x)-df^\alpha(x)\cdot (\mathbf{x}_\alpha(y) 
	- \mathbf{x}_\alpha(x)) \right|}{d(x,y)} = 0.
\end{equation}
Moreover, $df^\alpha$ is unique up to sets of measure zero.
\end{enumerate}
\end{definition}

(Note that Keith uses the term ``strong measurable differentiable structure'' for the
above 
object.)


We can now state the main theorem,
which gives a sufficient condition for the existence of such a differentiable
structure.  (See \cite[Theorem 2.3.1]{KeDS} and \cite[Theorem 4.38]{Ch99}.)

\begin{theorem}\label{thm-main}
	If $(X,d,\mu)$ is a metric measure space that is doubling and supports
	a $p$-Poincar\'e inequality with constant $L\geq 1$  for some $p\geq 1$
	(see Definition \ref{def-pidefinition}),
	then $X$ admits a measurable differentiable structure with
	dimension bounded above by a constant depending only on $L$ and the doubling
	constant.
\end{theorem}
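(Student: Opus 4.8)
The plan is to produce the coordinate patches by selecting, on suitable measurable subsets, maximal tuples of Lipschitz functions that are \emph{infinitesimally independent}, the crucial point being that the Poincar\'e inequality forces the size of such tuples to be uniformly bounded. Throughout, the natural infinitesimal quantity is the pointwise upper Lipschitz constant
\[
\Lip f(x) = \limsup_{y \to x} \frac{|f(y) - f(x)|}{d(x,y)},
\]
and the first observation I would record is that condition \eqref{eq-df} is equivalent to the single requirement that $\Lip\bigl(f - df^\alpha(x)\cdot \mathbf{x}_\alpha\bigr)(x) = 0$ at a.e.\ $x$ (the constant shift by $\mathbf{x}_\alpha(x)$ being irrelevant to $\Lip$ at $x$). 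Thus ``spanning the differentials'' means exactly that, for every Lipschitz $f$ and a.e.\ $x$, there is a vector $\lambda(x)$ with $\Lip(f - \lambda(x)\cdot\mathbf{x}_\alpha)(x) = 0$; the entire problem is to arrange, over a full-measure partition, finite tuples rich enough to achieve this, with the number of entries bounded.

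First I would set up the infinitesimal linear algebra. For a fixed finite tuple $\mathbf{x}$ of Lipschitz functions and a point $x$, the map $a \mapsto \Lip(a \cdot \mathbf{x})(x)$ is a seminorm on $\R^k$, since subadditivity and homogeneity of $\Lip$ at a fixed point are elementary. Call $\mathbf{x}$ \emph{$\delta$-independent at $x$} if this seminorm is bounded below by $\delta|a|$; this is the quantitative form of ``no nontrivial combination is infinitesimally constant,'' and it guarantees that the coefficient vector $\lambda(x)$, when it exists, is unique, which will eventually supply the a.e.\ uniqueness of $df^\alpha$.

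The main obstacle, and the place where the hypotheses do their real work, is the uniform dimension bound: there is $N_0 = N_0(C, L, p)$ such that no tuple can be infinitesimally independent on a set of positive measure once its length exceeds $N_0$, with $\delta$-independence as the measurable handle realizing this bound. I expect to prove this at a density point by a blow-up/rescaling argument: rescale the metric and the functions so that each $x^i$ is $1$-Lipschitz while the tuple stays $\delta$-independent; since the doubling condition and the $p$-Poincar\'e inequality are scale-invariant, they pass to the limiting structure, and there the Poincar\'e inequality --- which supplies an abundance of rectifiable curves and thereby bounds $\Lip$ of any combination from below by genuine averaged gradients --- is incompatible with too many independent directions, the ceiling being controlled by the doubling constant. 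This is the step I expect to be the hardest and most technical. The Poincar\'e inequality is genuinely essential here: on merely doubling spaces, such as totally disconnected ones, arbitrarily long independent families exist and no finite-dimensional structure is possible.

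Finally, with $N_0$ in hand I would carry out the measurable construction. Fix a countable family $\mathcal{D}$ of Lipschitz functions, dense in a suitable sense, and for each length $k \le N_0$, each $k$-tuple drawn from $\mathcal{D}$, and each rational $\delta$, form the measurable set of points at which that tuple is $\delta$-independent and \emph{maximal}, meaning no further function of $\mathcal{D}$ can be adjoined without destroying $\delta$-independence. A maximality-plus-pigeonhole argument, using the ceiling $N_0$, shows these sets exhaust $X$ up to measure zero; intersecting and disjointifying produces the countable collection $\{X_\alpha\}$ with its tuples $\mathbf{x}_\alpha$. On each $X_\alpha$, maximality forces every $g \in \mathcal{D}$ to be infinitesimally spanned, an approximation argument extends this to all Lipschitz $f$, and $\delta$-independence yields the a.e.\ uniqueness of the resulting $df^\alpha$. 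Since each $N(\alpha) \le N_0$, the differentiable structure has dimension at most $N_0$, which depends only on $C$, $L$, and $p$, as required.
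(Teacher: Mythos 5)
Your overall architecture---a uniform bound $N_0$ on the length of tuples of Lipschitz functions that can be infinitesimally independent on a set of positive measure, followed by a maximal-tuple selection argument to build the patches---is the same as the paper's (Propositions \ref{prop-findim-coords} and \ref{prop-Liplip-findim}). But the step you defer as ``the hardest and most technical'' is the heart of the whole proof, and your sketch of it omits the two ideas that make it work. First, the Poincar\'e inequality is not exploited by letting it ``pass to the limiting structure'': the paper first converts it into the pointwise Lip-lip inequality $\Lip_x f \leq K\,\lip_x f$ for $\mu$-a.e.\ $x$ (Proposition \ref{prop-pi-Liplip}), and that conversion itself requires quasiconvexity (Theorem \ref{thm-quasiconvexity}) plus a chaining/telescoping argument over a chain of balls of radius $\lambda r$. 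Nothing in your sketch produces this comparison; ``bounds $\Lip$ from below by averaged gradients'' is not it---the point is to dominate the $\limsup$ of $\var_{x,r}f$ by its $\liminf$. Second, the actual dimension bound has a concrete mechanism: the Lip-lip inequality forces every tangent function $u$ at a.e.\ point to be $K$-quasilinear, i.e.\ $\var_{y,r}u \geq \frac{1}{K}\LIP(u)$ on every ball, and a linear space of $K$-quasilinear functions on a $C$-doubling space is finite-dimensional because restriction to a finite $\frac{r}{4K}$-net of a ball is injective (Lemma \ref{thmquasilinearbound}). ``Incompatible with too many independent directions, with the ceiling controlled by the doubling constant'' is the statement to be proved, not an argument; without quasilinearity there is no handle for the net argument, and a route through persistence of the Poincar\'e inequality under blow-up would require a substantial separate theorem that the paper deliberately avoids.

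Your selection step also has a gap. You build maximal tuples relative to a countable family $\mathcal{D}$ and then ``extend by approximation'' to all Lipschitz $f$. The quantity you must control is $\Lip_x(f-g)$, which is bounded by $\LIP(f-g)$ but not by uniform closeness of $f$ and $g$; since $\LIP(X)$ is not separable in the Lipschitz seminorm (already for $X=[0,1]$), no countable $\mathcal{D}$ is dense in the sense your extension needs. The paper sidesteps this in Lemma \ref{lem-decomp}: the maximality is taken over the integer $N$ (the longest tuple independent on a positive-measure subset), so for an \emph{arbitrary} Lipschitz $f$ the $(N+1)$-tuple $(\mathbf{x},f)$ is dependent a.e.\ by maximality of $N$, with no approximation required. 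Rational approximation is used only inside the finite-dimensional span of a fixed tuple, where the bound $\Lip_x\bigl(\sum_i(b_i-a_i)f_i\bigr) \leq \sum_i |b_i-a_i|\,\LIP f_i$ gives the needed control.
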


\subsection{Examples} 
We illustrate some of the possibilities for measurable
differentiable structures with the following examples.

	\begin{enumerate}
		\item \textbf{Euclidean spaces:} As a consequence of Rademacher's theorem,
			the metric measure space $\R^n$ (with the usual Euclidean metric
			and Lebesgue measure), has a measurable differentiable structure
			given by a single coordinate patch $(\R^n, \mathbf{x})$,
			where $\mathbf{x}$ is given by the coordinate functions 
			$\mathbf{x} = (x^1, \ldots x^n)$.
			
		\item \textbf{Carnot groups:} As a specific example of such a space,
			consider the Heisenberg group $\mathbb{H}$ of unipotent, 
			upper triangular, $3 \times 3$ matrices.
			As a set, $\mathbb{H}$ can be described by $\R^3 = \{ (x,y,z) \}$ with 
			a Carnot-Carath\`eodory metric and the usual Lebesgue measure.
			As a consequence of a theorem of Pansu~\cite{Pansu},
			this space carries a measurable differentiable structure
			with a single coordinate patch given by $\mathbf{x} = (x,y)$.
			In particular, 
			the dimension of the differentiable structure is two,
			the topological dimension of the space is three, and
			the Hausdorff dimension of the space is four,
			showing that all three may differ.
			
		\item \textbf{Glued spaces:} Consider the Heisenberg group $\mathbb{H} = \{(x,y,z)\}$ as above,
			and $\R^4 = \{(a,b,c,d)\}$ with its usual metric and measure.
			Note that these are both Ahlfors $4$-regular metric measure spaces.
			Choose an isometrically embedded copy of $\R^1$ in each --- 
			for example, the $x$-axis in $\mathbb{H}$, and the 
			$a$-axis
			 in $\R^4$ ---
			and let $X$ be the space formed by gluing $\mathbb{H}$ and $\R^4$ along these subsets.
			
			There is a natural geodesic path metric $d$ on $X$, and the measures combine to give
			an Ahlfors $4$-regular measure $\mu$ on $(X,d)$.
			By \cite[Example 6.19(a)]{HK98}, 
			$X$ admits a $p$-Poincar\'e inequality for $p > 3$.
			
			The space $(X,d,\mu)$ has a measurable differentiable structure
			with two coordinate patches,
			\[
				( X_1=\mathbb{H}, \;\mathbf{x_1} = (x, y)) \text{ and } 
					( X_2=\R^4, \;\mathbf{x_2} = (a,b,c,d) ).
			\]
			Notice that these coordinate patches
			are of different dimensions.
			
		\item \textbf{Laakso spaces:} For every $Q\geq 1$, Laakso builds an Ahlfors $Q$-regular space
			that admits a $1$-Poincar\'e inequality \cite{Laakso}.
			These fractal spaces have topological dimension one.
		
		\item \textbf{Bourdon-Pajot spaces:} These spaces arise as the boundary at infinity 
			of certain Fuchsian buildings that are 
			important examples
			 in geometric group theory.
			They are all homeomorphic to the Menger sponge, and admit a $1$-Poincar\'e inequality.
			
		\item \textbf{Limit spaces:}  The Gromov-Hausdorff limit of a sequence of Riemannian manifolds
			with Ricci curvature uniformly bounded from below, and diameter uniformly bounded
			from above, will admit a $1$-Poincar\'e inequality, even though it may no longer
			be a manifold.
	\end{enumerate}


\subsection{Organization of the paper}
In Section 2 we give an overview of the proof; readers with background in analysis on 
metric spaces may prefer to skip this, and refer back to it for definitions as needed.  
The proof of Theorem~\ref{thm-main} is given in Sections \ref{sec-prelim}-\ref{sec-pi-Liplip}.
In  Appendix \ref{sec-appendix}
we give a simpler proof of the well known result of Semmes~\cite[Appendix A]{Ch99}
that a Poincar\'e inequality on a complete doubling metric space implies that
the space is quasiconvex.  (That is, for all $x,y \in X$ there is a path
joining $x$ to $y$ of length at most $C d(x,y)$, for some uniform constant $C$.)

\begin{varthm}[Theorem~\ref{thm-quasiconvexity}]
	Suppose $X$ admits a $p$-Poincar\'e inequality (with constant $L\geq 1$) for some
	$p \geq 1$.  Then $X$ is $C$-quasiconvex, where $C$ depends only on $L,p$
	and the doubling constant.
\end{varthm}


\subsection{Acknowledgments} We thank Enrico Le Donne for comments on an earlier draft.

\section{Overview of the proof of Theorem~\ref{thm-main}}
Our purpose in this section is to give a nontechnical presentation of the proof
of Theorem~\ref{thm-main}, providing motivation, 
and a treatment more
accessible to readers from other areas.

\subsection{Finite dimensionality yields a measurable differentiable structure}
The first step in the proof of Theorem~\ref{thm-main} is a rather general argument showing that a $\si$-finite
metric measure space has a measurable differentiable structure provided it satisfies 
a certain finite dimensionality condition.   This involves two definitions:

\begin{definition}\label{def-dep}
An $N$-tuple of functions $\mathbf{f} = (f_1, \ldots, f_N)$, where
$f_i:X \rightarrow \R$ for $1 \leq i \leq N$, is {\em dependent 
(to first
order)} at $x \in X$ if there exists
$\lambda \in \R^n \setminus \{0\}$ so that
\begin{equation}
\label{eqn-nontrivialrelation}
\lambda \cdot \mathbf{f}(y) -\lambda \cdot \mathbf{f}(x) = o(d(x,y))
\end{equation}
as $y$ goes to $x$.
We denote the set where $\mathbf{f}$ is not dependent by $\ind(\mathbf{f})$.
\end{definition}

\begin{definition}\label{def-findim}
We say that in $(X,d,\mu)$
{\em the differentials have   dimension at most $N$} if 
 every $(N+1)$-tuple of Lipschitz 
functions is dependent almost everywhere.  We say that the  {\em differentials
have finite dimension} if they have dimension at most $N$ for some $N\in \N$.
\end{definition}

With these definitions, the first step of the proof is the following:

\begin{varthm}[Proposition \ref{prop-findim-coords}] \emph{} 
	If the differentials have  dimension at most $N_0$,
	then $X$ admits a measurable differentiable structure whose dimension is
	at most $N_0$.
\end{varthm}

\noindent
The proof of Proposition \ref{prop-findim-coords} is selection argument
analogous to the proof that a spanning subset
of a vector space contains a basis.  It works in considerable generality, e.g. for any $\si$-finite
metric measure space.

\subsection{Blow-up arguments, tangent spaces and tangent functions}
The remainder of the proof is devoted to showing that under the conditions of 
Theorem~\ref{thm-main}, the differentials have finite dimension.   To do this, one is faced
with analyzing the behavior of a tuple $(f_1,\ldots,f_N)$
of Lipschitz functions near a typical point in $X$, in order to produce nontrivial linear combinations
satisfying (\ref{eqn-nontrivialrelation}).
Following \cite{KeDS}, we approach this using a blow-up argument.   Blow-up arguments occur
in many places in geometry and analysis; the common features are a rescaling procedure which normalizes
some quantity of interest,  combined
with a compactness result which allows one pass to a limiting object which reflects the  asymptotic
behavior of the rescaled quantity.   Then one proceeds by studying the limiting object in order to derive 
a contradiction, or to establish a desired estimate.   
We point out that the blow-up argument is  not essential to this proof; it
is possible to work directly in the space itself.  However, in our view, the blow-up
argument clarifies and streamlines the proof.

For readers who are unfamiliar with this setting and/or blow-up arguments, 
we first illustrate the ideas using a single function.

To fix terminology and notation, we 
recall that a function $f:Y\ra Z$ between metric spaces is {\em $C$-Lipschitz} if 
$$
d_Z(f(p),f(q))\leq C\,d_Y(p,q)
$$
for all $p,q\in Y$,  while the {\em Lipschitz constant of $f$}
$$
\LIP(f)=\sup_{p,q\in Y,\,p\neq q}\frac{d_Z(f(p),f(q))}{d_Y(p,q)}
$$
is the infimal such $C$.  We let $\LIP(Y)$ denote the collection of real-valued Lipschitz functions $f:Y\ra \R$.

Now suppose $f\in \LIP(X)$ is a Lipschitz function, and $x\in X$.   To study the behavior of $f$ near $x$,
we may choose a sequence of scales $\{r_k\}$  tending to $ 0$, and consider the corresponding
sequence of rescalings of $(X,d)$, i.e. the sequence of metric spaces
$\{(X_k,d_k)\}$, where $X_k=X$ and $d_k=\frac{1}{r_k}d$.   
One then defines a sequence of functions $\{f_k:X_k\ra \R\}$ by rescaling $f$ accordingly:  $f_k=\frac{1}{r_k}f$.
Then $f_k$ has the same Lipschitz constant as $f$, and the behavior of $f$ in the ball $B(x,r_k)$
corresponds to the behavior of $f_k$ on the unit ball $B(x,1)\subset (X_k,d_k)$.  

Next, by passing to  a  subsequence, and using a suitable notion of convergence,
we may assume that the metric spaces $(X_k,d_k)$ converge to a 
(Gromov-Hausdorff) tangent space $(X_\infty,d_\infty)$,
and the functions $f_k:X_k\ra \R$ converge to a tangent function
$f_\infty:X_\infty\ra \R$ which is $\LIP(f)$-Lipschitz.
We will suppress the details for now, 
and refer the reader to Section \ref{sec-prelim} for
the notion of convergence (pointed Gromov-Hausdorff convergence)
and the relevant compactness theorems.   The space $X_\infty$ comes with a specified
basepoint $x_\infty\in X_\infty$, and the restriction of $f_\infty$ to the ball $B(x_\infty,R)$
is a limit of the restrictions $f_k\restr_{B(x_k,R)}$.

\subsection{Pointwise Lipschitz constants and tangent functions}
The tangent function $f_\infty$ is $\LIP(f)$-Lipschitz; however, 
since $f_\infty$ only reflects the behavior
of the original function $f$ near $x$, one is led to consider localized versions of the Lipschitz constant, 
as in the following definitions.

\begin{definition}\label{def-lip-constants}(Variation and pointwise Lipschitz constants)
\\
\noindent
Suppose $Y$ is a metric space, $x\in Y$, and $u\in \LIP(Y)$.
\begin{enumerate}
\item {\em The variation of $u$ on a ball $B(x,r)\subset Y$} is defined to be
	\begin{equation}
		\var_{x,r} u:= \sup \left\lbrace \frac{| u(y) - u(x) |}{r} \mid 
			y \in B(x,r) \right\rbrace.
	\end{equation}
\noindent
We always have $\var_{x,r} u \leq \LIP(u)$.
	
\item The {\em lower pointwise Lipschitz constant of $u$ at $x$} is	
	$$
		\lip_x u := \liminf_{r \rightarrow 0} \var_{x,r} u\,.
		$$
\item 	The {\em upper pointwise Lipschitz constant of $u$ at $x$} is		
$$
\Lip_x u := \limsup_{r \rightarrow 0} \var_{x,r} u\,.
$$
\end{enumerate}
\end{definition}

\bigskip
For any function $u:Y \ra \R$, and $x \in Y$,
we have $\lip_x u\leq \Lip_x u$.
In general, $\lip_x u$ and $\Lip_x u$ need not be comparable.
However, in the special case of $Y = \R^n$, 
if $x$ is a point of differentiability
of $u$, observe that $\lip_x u = \Lip_x u= |\nabla u(x)|$.

Returning to the tangent  function $f_\infty:X_\infty\ra \R$, one observes that the restriction
of $f_\infty$ to the ball $B(x_\infty,R)\subset X_\infty$ is the limit of the sequence $\{f_k\restr_{B(x_k,R)}\}$, 
which,  in turn, arises from rescaling $f\restr_{B(x,Rr_k)}$.   This leads to the bound 
\begin{equation}
\label{eqn-upperlowerlipschitzcontrolvar}
\lip_xf\leq\var_{x_\infty,R}f_\infty \leq \Lip_x f
\end{equation}
for all $R\in [0,\infty)$; in other words, the lower and upper
pointwise Lipschitz constants of $f$ at $x$ control the 
variation of 
the tangent function $f_\infty$ on balls centered at $x_\infty$.   

Using the fact that the measure on $X$
is doubling,  one can strengthen this assertion to:  For almost every $x\in X$,
every tangent function $f_\infty$ of $f$ at $x$ 
satisfies $\lip_xf\leq \var_{y,r}f_\infty\leq \Lip_xf$ for every $y\in X_\infty$, $r\in [0,\infty)$.  
The second inequality 
is equivalent to $\LIP(f_\infty)\leq \Lip_xf$.   However, for a general doubling metric measure space, the
quantity $\var_{x,r}f$ can fluctuate wildly as $r\ra 0$, which means that one could have
$\LIP(f_\infty)\ll \Lip_xf$.
A key observation of Keith  -- based on a closely related earlier observation of Cheeger -- is that when
$(X,d,\mu)$ satisfies a Poincare inequality, then this bad behavior can only occur when $x\in X$ belongs to 
a set of measure zero.

\begin{definition}[{\cite[(5)]{KeDS}}]\label{def-Lip-lip}
	We say $X$ is a \emph{$K$-Lip-lip space} if for every $f\in \LIP(X)$, 
	\begin{equation}\label{eq-Liplip}
			\Lip_x f \leq K\, \lip_x f 
	\end{equation}
for $\mu$-a.e. $x \in X$.
	If $X$ is a $K$-Lip-lip space for some $K > 0$, we say that
	\emph{$X$ is a Lip-lip space}.
\end{definition}

\begin{varthm}[Proposition~\ref{prop-pi-Liplip}] \cite[Prop.~4.3.1]{KeDS}\;\;
If $(X,\mu)$ is doubling, and satisfies a $p$-Poincar\'e
inequality, then $X$ is a $K$-Lip-lip space,
where $K$ depends only on the constants in the doubling
and Poincar\'e inequalities.
\end{varthm}
\noindent 
By Proposition~\ref{prop-pi-Liplip} it suffices to prove that the differentials have finite dimension
in any Lip-lip space.

\subsection{Tangent functions in Lip-lip spaces, and quasilinearity}
By (\ref{eqn-upperlowerlipschitzcontrolvar}), if  $(X,d,\mu)$ is a $K$-Lip-lip space,
and $f\in \LIP(X)$,  then for $\mu$-a.e. $x\in X$,  every tangent function
$f_\infty:X_\infty\ra \R$ of $f$ at $x$, and every $y\in X_\infty$, $r\in [0,\infty)$,
one has
\begin{equation}
\label{eqn-varlipLip}
\lip_xf\leq \var_{y,r}f_\infty\leq \LIP(f_\infty)\leq \Lip_xf\leq K\lip_xf\,,
\end{equation}
so in particular
\begin{equation}
\label{eqn-varLIP}
\var_{y,r}f_\infty\geq \frac{1}{K}\LIP(f_\infty)\,.
\end{equation}
Thus for any ball $B(y,r)\subset X_\infty$, the 
variation of $f_\infty$ on $B(y,r)$ agrees with the  global Lipschitz constant $\LIP(f_\infty)$
to within a factor of $K$.   This  leads to:

\bigskip
\begin{definition}
\label{def-quasilinear}
	A Lipschitz function $u:Z \ra \R$ on a metric space $Z$
	 is {\em $L$-quasilinear} if the variation
	of $u$ on every ball $B(x,r)$ satisfies 
	$$
	\var_{x,r} u \geq \frac{1}{L} \mathrm{LIP}(u)\,.
	$$
\end{definition}

In summary: when $X$ satisfies the $K$-Lip-lip condition, then for every $f\in \LIP(X)$
and $\mu$-a.e. $x\in X$, every 
tangent function of $f$ at $x$ is $K$-quasilinear.   

We need another version of the doubling condition appropriate to metric spaces:

\begin{definition}
\label{def-doublingmetricspace}
A metric space $Z$ is {\em $C$-doubling} if every ball can be covered by at most
$C$ balls of half the radius.  A metric space is {\em doubling} if it is $C$-doubling
for some $C$.
\end{definition}

The last key ingredient in the proof is:

\bigskip
\begin{varthm}[Lemma~\ref{thmquasilinearbound}]
For every $K,C$ there is an $N\in \N$ such that 
the space of $K$-quasilinear functions on a $C$-doubling metric  space $Z$ has dimension
at most $N$.
\end{varthm}

The Gromov-Hausdorff tangent spaces $X_\infty$ arising from a doubling metric measure space 
$X$ are all $C$-doubling for a fixed $C\in [1,\infty)$.  Therefore by 
Lemma~\ref{thmquasilinearbound} there  is  a uniform upper bound on the 
dimension of any space of $K$-quasilinear functions on any Gromov-Hausdorff tangent space of $X$.

A related finite dimensionality result appears in \cite{Ch99}.   We would like to point out
that a similar idea appears in the
earlier finite dimensionality theorem of Colding-Minicozzi \cite{coldingminicozzi}, also in the setting of 
spaces which satisfy a doubling condition and a Poincare inequality 
 (in  \cite{coldingminicozzi} the spaces are  Riemannian manifolds, though the smooth structure is not
used in an essential way).
In their paper,  the quasilinearity condition is 
replaced by a condition which compares the size of a function on a ball 
(measured in terms of normalized energy) with its size on subballs, and uses this
together with the Poincare inequality and doubling property to bound the dimension of 
a space of harmonic functions.

To complete the proof that the differentials have finite dimension in a $K$-Lip-lip space, we fix
an $n$-tuple of Lipschitz functions $\mathbf{f}=(f_1,\ldots,f_n)$ for some $n\in\N$.
Amplifying  the above reasoning,
there will be a full measure set of points $x\in X$ such that every set of tangent
functions $\mathbf{f}_\infty=
(f_{1,\infty},\ldots, f_{n,\infty})$ at $x$ spans a space of $K$-quasilinear
functions.   Thus when $n$ is larger than the dimension bound coming from 
Lemma~\ref{thmquasilinearbound}, there will be a nontrivial linear relation
$
\la\cdot \mathbf{f}_\infty =0\,
$
for some $\la\in \R^n\setminus \{0\}$.    This  implies 
that $f_1,\ldots,f_n$ are dependent at $x$.

\section{Preliminaries}\label{sec-prelim}

\subsection{Lipschitz constants}

Recall that we work inside a metric measure space $(X,d,\mu)$,
 where $\mu$ is a Borel regular measure on $X$.

We begin by making some observations about $\lip_x f$ and $\Lip_x f$ (see Definition~\ref{def-lip-constants}).
\begin{lemma}\label{lem-lip-Lip-meas}
	If $f:X \ra \R$ is Lipschitz, then $\lip_x f$ and $\Lip_x f$ are Borel measurable
	functions of $x$.
\end{lemma}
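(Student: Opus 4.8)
The plan is to reduce the measurability of $\lip_x f$ and $\Lip_x f$ to two facts: that for each fixed radius the map $x \mapsto \var_{x,r} f$ is Borel, and that the limits defining $\lip_x f$ and $\Lip_x f$ can be computed along a fixed countable family of radii. Once both are established, $\lip_x f$ and $\Lip_x f$ are exhibited as countable $\liminf$'s and $\limsup$'s of Borel functions, hence are themselves Borel.

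First I would fix $r > 0$ and show that $x \mapsto \var_{x,r} f$ is lower semicontinuous, which suffices since lower semicontinuous functions are Borel. Indeed, if $x_n \to x$ and $y \in B(x,r)$, then $d(x_n,y) \to d(x,y) < r$, so $y \in B(x_n,r)$ for all large $n$, and continuity of $f$ gives $\tfrac{1}{r}|f(y)-f(x_n)| \to \tfrac{1}{r}|f(y)-f(x)|$. Hence $\liminf_n \var_{x_n,r} f \ge \tfrac{1}{r}|f(y)-f(x)|$ for each such $y$, and taking the supremum over $y \in B(x,r)$ yields $\liminf_n \var_{x_n,r} f \ge \var_{x,r} f$. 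The one-sided estimate is all one can expect: as $x_n \to x$ new points may enter $B(x_n,r)$, so upper semicontinuity can genuinely fail. This is why some care is needed here, but lower semicontinuity is enough.

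The second and main step is to verify that
\[
\lip_x f = \liminf_{\substack{r \to 0 \\ r \in \Q}} \var_{x,r} f,
\qquad
\Lip_x f = \limsup_{\substack{r \to 0 \\ r\in\Q}}\var_{x,r}f.
\]
The restriction to a countable set of radii is not automatic, and this is where I expect the only real difficulty. The key point is that $M(x,r) := \sup_{y \in B(x,r)}|f(y)-f(x)| = r\,\var_{x,r}f$ is non-decreasing in $r$, because $B(x,r)\subseteq B(x,r')$ for $r\le r'$. Since the rationals form a subset, $\liminf$ over rational $r$ is automatically $\ge \lip_x f$ and $\limsup$ over rational $r$ is $\le \Lip_x f$. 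For the reverse inequalities, choose $r_n \to 0$ realizing $\lip_x f$ (respectively $\Lip_x f$) and pick rationals $q_n$ with $r_n(1-1/n) < q_n < r_n$ (respectively $r_n < q_n < r_n(1+1/n)$). Monotonicity of $M(x,\cdot)$ gives $M(x,q_n)\le M(x,r_n)$ (respectively $M(x,q_n)\ge M(x,r_n)$), so in the $\lip$ case
\[
\var_{x,q_n} f = \frac{M(x,q_n)}{q_n} \le \frac{M(x,r_n)}{r_n(1-1/n)} = \frac{\var_{x,r_n}f}{1-1/n},
\]
and symmetrically $\var_{x,q_n}f \ge \var_{x,r_n}f/(1+1/n)$ in the $\Lip$ case. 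Since $q_n \to 0$ and the right-hand sides converge to $\lip_x f$ (respectively $\Lip_x f$), this squeezes the rational $\liminf$ (respectively $\limsup$) to the full value.

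Finally, writing $\lip_x f = \sup_{j\in\N}\inf_{q\in\Q\cap(0,1/j)}\var_{x,q}f$ and $\Lip_x f = \inf_{j\in\N}\sup_{q\in\Q\cap(0,1/j)}\var_{x,q}f$ expresses each as a countable combination of the Borel functions $x \mapsto \var_{x,q}f$ from the first step; since countable suprema and infima of Borel functions are Borel, this completes the argument. The crux is the reduction to rational radii via the monotonicity of $M(x,\cdot)$; the measurability for fixed $r$ and the final bookkeeping are routine by comparison.
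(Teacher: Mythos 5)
Your proof is correct and follows essentially the same route as the paper's: lower semicontinuity of $x \mapsto \var_{x,r} f$ for fixed $r$, reduction to rational radii via the monotonicity of $r \mapsto r\,\var_{x,r} f$ (the paper encodes this as the inequalities $(s-\eps)\var_{x,(s-\eps)}f \leq s\,\var_{x,s}f \leq (s+\eps)\var_{x,(s+\eps)}f$), and then a countable supremum/infimum bookkeeping step. You have simply filled in the details that the paper leaves terse.
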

\begin{proof}
	For fixed $r>0$, we see that $\var_{x,r} f$ is a lower-semicontinuous function of $x$.
	(Note that $f$ is Lipschitz, so the variation over open balls cannot jump up as we approach
	a point.)

	We can rewrite $\Lip_x f$ as follows:
	\begin{equation}\label{eq-Lip-meas}
		\begin{split}
			\Lip_x f & = \lim_{r \ra 0} \sup\lbrace \var_{x,s} f \mid s<r \rbrace \\
				& = \lim_{r \ra 0} \sup \lbrace \var_{x,s} f \mid s<r, s \in \Q \rbrace .
		\end{split}
	\end{equation}

	The first equality holds by definition, and the second from the inequalities
	\[
		(s-\eps) \var_{x,(s-\eps)} f \leq s \var_{x,s} f 
			\leq (s+\eps) \var_{x,(s+\eps)} f.
	\]
	A countable supremum of measurable functions is measurable, and a pointwise limit of
	measurable functions is also measurable.  Therefore, by equation~\eqref{eq-Lip-meas}, we see that
	$\Lip_x f$ is a measurable function of $x$.  An analogous argument gives the same conclusion for
	$\lip_x f$.
\end{proof}

	In fact, for any $x \in X$, $\Lip_x(\cdot)$ defines a seminorm on $\LIP(X)$.
\begin{lemma}\label{lem-Lip-triangleineq}
	If $f:X \ra \R$ and $g:X \ra \R$ are Lipschitz, then for all $x \in X$ we have
	$\Lip_x (f+g) \leq \Lip_x f + \Lip_x g$.
\end{lemma}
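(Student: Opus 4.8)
The plan is to prove the inequality one scale at a time and then pass to the limit; the only ingredients are that the supremum defining $\var_{x,r}$ and the $\limsup$ defining $\Lip_x$ are both subadditive, and that is enough.

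First I would fix $x \in X$ and $r > 0$ and establish the scale-$r$ estimate $\var_{x,r}(f+g) \leq \var_{x,r} f + \var_{x,r} g$. For any $y \in B(x,r)$, the triangle inequality in $\R$ gives
$$
|(f+g)(y)-(f+g)(x)| \leq |f(y)-f(x)| + |g(y)-g(x)|.
$$
Dividing by $r$ and taking the supremum over $y \in B(x,r)$, and using that the supremum of a sum of two nonnegative expressions is at most the sum of their suprema, yields the desired scale-$r$ estimate.

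Next I would take $\limsup_{r \to 0}$ of both sides. Since the $\limsup$ of a sum is at most the sum of the $\limsup$'s, one obtains
$$
\Lip_x(f+g) = \limsup_{r \to 0}\var_{x,r}(f+g) \leq \limsup_{r \to 0}\left(\var_{x,r} f + \var_{x,r} g\right) \leq \Lip_x f + \Lip_x g,
$$
which is exactly the claim.

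There is no real obstacle here: the statement is a routine consequence of the subadditivity of $\sup$ and of $\limsup$. The only points worth attention are that $\var_{x,r}$ subtracts the centered value $u(x)$, so that the pointwise triangle inequality in $\R$ applies cleanly before taking suprema, and that one must invoke subadditivity of $\limsup$ rather than any limit law, since neither $\Lip_x f$ nor $\Lip_x g$ is assumed to be an honest limit. I would also note that the same argument shows homogeneity, $\Lip_x(\lambda f) = |\lambda|\,\Lip_x f$, so that $\Lip_x(\cdot)$ is indeed a seminorm on $\LIP(X)$ as asserted in the surrounding text.
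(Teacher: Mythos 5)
Your proof is correct and rests on the same pointwise triangle inequality in $\R$ that drives the paper's argument; you simply organize it as subadditivity of $\var_{x,r}$ at each fixed scale followed by subadditivity of $\limsup$, whereas the paper runs a direct $\eps$-argument with difference quotients. Both are valid, and your version has the minor advantage of working directly with $\var_{x,r}$ as defined (normalized by $r$), avoiding the paper's implicit passage to quotients normalized by $d(x,y)$.
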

\begin{proof}
	Fix $x \in X$.  Suppose we are given $\eps > 0$.  By equation \eqref{eq-Lip-meas}
	there exists $r>0$ so that for all $y \in B(x,r)$ we have
	\[ \frac{|f(y)-f(x)|}{d(x,y)} \leq \Lip_x f + \eps \quad \text{and} \quad
		\frac{|g(y)-g(x)|}{d(x,y)} \leq \Lip_x g + \eps.
	\]
 	We can find $y \in B(x,r)$ so that
 	\[ \Lip_x (f+g) \leq \frac{|(f+g)(y)-(f+g)(x)|}{d(x,y)} + \eps,\]
 	and applying the triangle inequality we see that
 	\[ \Lip_x (f+g) \leq (\Lip_x f + \eps) + (\Lip_x g + \eps) + \eps. \qedhere\]
\end{proof}

\begin{definition}
	Suppose $A \subset X$ is measurable.
	A point $x \in X$ is a {\em point of density of $A$} if
	\[
		\lim_{r \ra 0} \frac{\mu( B(x,r) \setminus A)}{\mu( B(x,r) )} = 0.
	\]
	
	A function $f:X \ra \R$ is {\em approximately continuous} at $x \in X$ if there exists a measurable
	set $A$, for which $x$ is a point of density, so that $f$ restricted to $A$ is continuous at $x$.
\end{definition}

\begin{lemma}[Theorem 2.9.13, \cite{Fed}]
	Assume $\mu$ is doubling.
	If $A \in X$ is measurable, then almost every point of $A$ is
	a point of density for $A$.
	
	If $f:X \ra \R$ is measurable, then $f$ is approximately continuous almost everywhere.
\end{lemma}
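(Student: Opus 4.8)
The plan is to reduce both assertions to the Lebesgue differentiation theorem for the doubling measure $\mu$: for every $g\in L^1_{loc}(X,\mu)$, at $\mu$-a.e. $x\in X$ one has
\[
\lim_{r\ra 0}\frac{1}{\mu(B(x,r))}\int_{B(x,r)}|g(y)-g(x)|\,d\mu(y)=0.
\]
To obtain this I would first establish a $5r$-covering (Vitali-type) lemma, which is available since doubling forces balls to have finite measure, then deduce the weak-type $(1,1)$ bound for the Hardy--Littlewood maximal operator $Mg(x)=\sup_{r>0}\frac{1}{\mu(B(x,r))}\int_{B(x,r)}|g|\,d\mu$; combined with density of continuous functions in $L^1_{loc}$, the standard argument then yields the differentiation theorem. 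Granting it, the density-point claim is immediate: apply the theorem to $g=\chi_A$. For $x\in A$ we have $\chi_A(x)=1$, so the Lebesgue limit becomes $\lim_{r\ra 0}\mu(B(x,r)\setminus A)/\mu(B(x,r))=0$, which is exactly the condition for $x$ to be a point of density of $A$. Hence $\mu$-a.e. point of $A$ is a point of density.

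For approximate continuity of a measurable $f:X\ra\R$, I would first reduce to a bounded function by composing with a homeomorphism $\phi:\R\ra(-1,1)$, say $\phi(t)=t/(1+|t|)$, and setting $g=\phi\circ f$. Then $g$ is bounded and measurable, hence (again because balls have finite measure) $g\in L^1_{loc}(X,\mu)$, so $g$ has a Lebesgue point at $\mu$-a.e. $x$; and since $\phi$ is a homeomorphism, $f$ is approximately continuous at $x$ if and only if $g$ is. Fix a Lebesgue point $x$ of $g$. For $\eps>0$ put $E_\eps=\{y:|g(y)-g(x)|\geq\eps\}$; Chebyshev's inequality together with the Lebesgue-point property shows that each $E_\eps$ has density $0$ at $x$.

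It then remains to assemble a single measurable set $A$ that has density $1$ at $x$ and on which $g$ (hence $f$) is continuous at $x$. I would do this by an annular exhaustion: choose radii $r_1>r_2>\cdots\ra 0$ with $\mu(B(x,\rho)\cap E_{1/k})\leq 2^{-k}\mu(B(x,\rho))$ for all $\rho\leq r_k$, and set
\[
A=\{x\}\cup\bigcup_{k\geq 1}\big((B(x,r_k)\setminus B(x,r_{k+1}))\setminus E_{1/k}\big).
\]
On $A$ one has $|g(y)-g(x)|<1/k$ whenever $y\in B(x,r_k)$, so $g|_A$ is continuous at $x$; and summing the measures of the removed pieces over the annuli, using only the monotonicity $\mu(B(x,r_k))\leq\mu(B(x,r))$ for $r_k\leq r$, gives $\mu(B(x,r)\setminus A)\leq 2^{1-m}\mu(B(x,r))$ for $r_{m+1}<r\leq r_m$, so $x$ is a point of density of $A$. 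This completes both parts.

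The genuinely substantive step is the Lebesgue differentiation theorem, that is, the covering lemma and the weak-$(1,1)$ maximal inequality in the metric setting; everything downstream is soft measure theory. I expect this to be the main obstacle, and it is worth noting that in a general metric space one cannot appeal to the Besicovitch covering theorem, so it is essential that the argument rest only on the $5r$-covering lemma, which is valid once balls have finite measure.
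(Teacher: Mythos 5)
Your argument is correct, and for the second assertion it takes a genuinely different route from the paper. The paper disposes of both claims by citation: the density-point statement is referred to Federer and to the Lebesgue differentiation theorem for doubling measures (which is exactly your route --- $5r$-covering lemma, weak-$(1,1)$ maximal inequality, apply to $\chi_A$), while approximate continuity is obtained in one line from Lusin's theorem: choose closed sets $F_n$ with $f\restr_{F_n}$ continuous and $\mu(X\setminus\bigcup_n F_n)=0$; almost every point of $F_n$ is a density point of $F_n$ by the first assertion, and at such a point $f$ is approximately continuous by definition. Your alternative builds the density set $A$ by hand from the Lebesgue-point property of $g=\phi\circ f$, using the sets $E_{1/k}$ and an annular exhaustion; the computation $\mu(B(x,r)\setminus A)\leq 2^{1-m}\mu(B(x,r))$ for $r_{m+1}<r\leq r_m$ is right, as is the continuity of $g\restr_A$ at $x$. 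What the Lusin route buys is that the second assertion becomes a formal corollary of the first with no further analysis; what your route buys is independence from Lusin's theorem and a concrete description of the set $A$, at the cost of re-running the Lebesgue-point machinery (which you need for part one anyway, so nothing extra is really paid). Two small points of precision: the $5r$-covering lemma is purely metric and needs no finiteness of measure, and the doubling inequality as literally stated does not by itself force balls to have finite measure --- that is part of the standing (and standard) nondegeneracy assumption on a doubling metric measure space; neither affects the correctness of your argument.
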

For the first part of this lemma, see also \cite[Theorem 1.8]{Hein-lect}.
The second part follows from Lusin's theorem.

\subsection{Gromov-Hausdorff convergence}

In this subsection we deal with metric spaces that do not a
priori come with a doubling measure; however, they are
doubling metric spaces, see Definition \ref{def-doublingmetricspace}.
  Every metric measure space
with a doubling measure is also a doubling metric space.
(For complete metric spaces the converse is also true,
but much less obvious.)


\begin{definition}
A sequence $\{(X_i,x_i)\}$ of pointed metric spaces 
{\em Gromov-Hausdorff converges} to a pointed metric
space $(X,x)$ if there is a sequence of maps
$\{\phi_i:X\ra X_i\}$, with $\phi_i(x)=x_i$ for all $i$,
such that for all $R \in [0,\infty)$ we have
$$
\limsup_{i\ra\infty} \big\{\; | d_{X_i}(\phi_i(y),\phi_i(z))-d_X(y,z) |
\;\mid\; y,z\in B(x,R)\subset X \big\} \;=\;0\,,
$$
and
$$
\forall \delta>0, \limsup_{i\ra\infty} \big\{\;d(y,\phi_i(B(x,R+\delta)))\;\mid\;
y\in B(x_i,R)\subset X_i \big\} \;=\;0.
$$

Such a sequence of maps is called a {\em Hausdorff approximation}.
\end{definition}

\begin{theorem}Every sequence of $C$-doubling pointed metric spaces $\{(X_i,x_i)\}$ has
a subsequence which Gromov-Hausdorff converges to a complete
$C$-doubling pointed metric space $(X,x)$.
\end{theorem}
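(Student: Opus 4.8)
The plan is to prove this Gromov precompactness statement by the standard strategy: first establish that a $C$-doubling space is uniformly totally bounded at every scale, then run a diagonal argument on the pairwise distances of carefully chosen nets to build the limit space abstractly, and finally produce the Hausdorff approximations and check that the doubling property descends to the limit.

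First I would record the key consequence of doubling: iterating the defining property shows that for every $R>0$ and $\eps>0$ there is an integer $N=N(C,R,\eps)$, independent of $i$, bounding the cardinality of any $\eps$-separated subset of the ball $B(x_i,R)\subset X_i$; in particular any maximal such subset is a finite $\eps$-net of $B(x_i,R)$. The bound comes from halving the radius about $\log_2(R/\eps)$ times, each halving costing a factor of $C$.

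Next, for each $i$ I would build a countable dense subset $D_i=\{p_i^0,p_i^1,\ldots\}$ of $X_i$, indexed by $\N$ with $p_i^0=x_i$, organized so that its combinatorics are controlled uniformly in $i$. Fixing an exhausting sequence of pairs $(R_m,\eps_m)$ with $R_m\ra\infty$ and $\eps_m\ra 0$, I take for each $m$ a maximal $\eps_m$-separated subset of $B(x_i,R_m)$, pad it to a fixed cardinality $N_m=N(C,R_m,\eps_m)$ by repeating points, and concatenate these lists over $m$. This guarantees that an initial segment $\{p_i^0,\ldots,p_i^{J_m}\}$, with $J_m$ independent of $i$, is an $\eps_m$-net of $B(x_i,R_m)$. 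Since each $p_i^j$ lies in a fixed ball, the distances $d_{X_i}(p_i^j,p_i^k)$ are bounded independently of $i$ for each fixed pair $(j,k)$, so a diagonal argument extracts a subsequence along which $\rho(j,k):=\lim_i d_{X_i}(p_i^j,p_i^k)$ exists for all $j,k$. The limit $\rho$ inherits symmetry and the triangle inequality from the $d_{X_i}$, hence is a pseudometric on $\N$; I let $X$ be the completion of the metric quotient of $(\N,\rho)$, with basepoint $x$ the image of $0$.

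To finish I would define Hausdorff approximations $\phi_i\colon X\ra X_i$ by sending the image of $j$ to $p_i^j$ and extending by density and uniform continuity. The distance condition in the definition of convergence holds on the dense set by construction, since $|d_{X_i}(p_i^j,p_i^k)-\rho(j,k)|\ra 0$, and passes to $B(x,R)$ by density; the second, almost-surjectivity, condition follows because the uniform nets $\{p_i^0,\ldots,p_i^{J_m}\}$ are $\eps_m$-dense in $B(x_i,R_m)$. Completeness of $X$ is automatic from the completion. The step I expect to be the main obstacle is verifying that $X$ is $C$-doubling with the \emph{same} constant $C$: given a ball $B(y,r)\subset X$, one covers the nearly isometric image $B(\phi_i(y),r)\subset X_i$ by $C$ balls of radius $r/2$ using the doubling of $X_i$, pulls the $C$ centers back to $X$ via almost-surjectivity, passes to a further subsequence so these centers converge to points $z^1,\ldots,z^C\in X$, and checks that the balls $B(z^\ell,r/2)$ cover $B(y,r)$. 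Arranging that the exact radius $r/2$, rather than $r/2+o(1)$, survives the limit is the delicate point; it is handled by fixing a point of $B(y,r)$ first and only then letting $i\ra\infty$, so that each containment is verified at the level of the limiting distances $\rho$.
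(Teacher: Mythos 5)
Your proposal is correct and takes essentially the same route as the paper, which sketches exactly this Arzel\`a--Ascoli-type argument---uniform finite $\eps$-nets in each ball from the doubling bound, a diagonal extraction over scales and radii, and convergence of the nets to the limit space---and defers the details to Burago--Burago--Ivanov. You have simply filled in the steps the paper omits, including the construction of the Hausdorff approximations and the (mildly delicate, but standard) check that the doubling constant survives in the limit.
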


	This follows from an Arzel\`a-Ascoli type of argument.  For each
	$\eps > 0$ and radius $r > 0$ 
	we can approximate $B(x_i,r) \subset X_i$ by a maximal $\eps$-separated
	net whose cardinality is independent of $i$.
	By repeatedly choosing subsequences we can
	ensure that these nets converge in the limit to a net of at most the same cardinality.
	To finish the proof, take further subsequences as $\eps \ra 0$ and $r \ra \infty$.
	For more details see \cite[Theorem 7.4.15]{BBI}.	

\begin{definition}
Let $\{(X_i,x_i)\}$ be a sequence of pointed metric spaces.
For a fixed 
countable
index set $\A$, suppose that $\{\F_i\}_{i\in\N}$ is
a sequence of collections of functions indexed by $\A$:
$$
\F_i=\{f_{i,\al}:X_i\ra \R\}_{\al\in\A}\,.
$$
Then the sequence of tuples 
$
\{(X_i,x_i,\F_i)\}_{i\in\N}
$
{\em Gromov-Hausdorff converges} to a tuple
$(X,x,\F)$, where $\F=\{f_\al:X\ra \R\}_{\al\in\A}$,
if there is a Hausdorff approximation
$\{\phi_i:X\ra X_i\}$ such that for all $x\in X$, $\al\in \A$,
$$
\lim_{i\ra\infty}\;f_{i,\al}(\phi_i(x))=f_\al(x).
$$
\end{definition}

If $\{(X_i,x_i)\}$ is a sequence of $C$-doubling
metric spaces, and $\{\F_i=\{f_{i,\al}:X_i\ra\R\}_{\al\in\A}\}$ 
is a sequence such that for every $\al\in\A$, both the Lipschitz
constants of the family $\{f_{i,\al}\}$ 
and the values $\{f_{i,\al}(x_i)\}$
are uniformly bounded,
then after passing to a subsequence if necessary,
the sequence of tuples $\{(X_i,x_i,\F_i)\}_{i\in\N}$ 
Gromov-Hausdorff converges.

\begin{definition}
	Suppose $X=(X,d)$ is a metric space, and $x \in X$.
\begin{enumerate}
\item 
	A pointed metric space $(X_\infty, d_\infty,x_\infty)$ is a
	{\em Gromov-Hausdorff (GH) tangent space} to $X$ at $x$ if it is the Gromov-Hausdorff
	limit of the pointed metric spaces
	$\{(X, d_i, x)\}_{i\in\N}$, where each $d_i = \frac{1}{r_i}d$ is
	the original metric $d$ rescaled by $r_i >0$, and
	the sequence $(r_i)$ converges to zero.
	
\item 	Suppose now that $\F=\{f_\al : X \ra\R\}_{\al\in\A}$ is a (countable)
	collection of functions on $X$.
	Then 
	\[ \U = \{u_{f_\al} : X_\infty\ra\R \}_{\al\in\A} \]
	is a collection of
	{\em tangent functions} of the functions $f_\al \in \F$ 
	at $x\in X$ if $\U$ is the Gromov-Hausdorff limit of the sequence of tuples
	$\{ (X,\frac{1}{r_i}d, x, \F_i) \}_{i\in\N}$, where
	\begin{gather*}
		\F_i = \left\lbrace f_{i,\al} : (X,d_i,x) \ra \R \right\rbrace_{\al\in\A}, \text{ and}\\
		f_{i,\al}(\cdot) = \frac{f_\al(\cdot)-f_\al(x)}{r_i}.
	\end{gather*}
	Since we used the same Hausdorff approximation and scaling factors
	for every $f_\al\in\F$,
	we say that the tangent functions are {\em compatible}.
\end{enumerate}
\end{definition}

We caution the reader that
the terminology used for GH tangent spaces varies: Cheeger calls them tangent cones,
and other objects tangent spaces, while Keith just calls them tangent spaces.

In general, the GH tangent spaces and functions one sees are highly dependent on the
sequence of scales chosen.

Since rescaling preserves doubling and Lipschitz constants, our previous discussion
has the following

\begin{corollary}\label{cor-tangents}

\begin{enumerate}
\item
	Doubling metric spaces have (doubling) GH tangent spaces at every point.
	
	\item Any countable collection $\F$ of uniformly Lipschitz functions on a doubling
	metric space $X$ has a compatible collection of tangent functions $\U$
	at every point of $X$.
	\end{enumerate}
\end{corollary}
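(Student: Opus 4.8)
The plan is to derive both statements directly from the two compactness results established just above, the only new ingredient being the observation, already flagged in the preamble to the corollary, that rescaling a metric by a positive constant leaves the doubling constant and all Lipschitz constants unchanged. For part (1) I would fix a $C$-doubling metric space $X$, a point $x\in X$, and any sequence $r_i\to 0$, and form the rescaled pointed spaces $(X,d_i,x)$ with $d_i=\frac{1}{r_i}d$. The key elementary check is that each $(X,d_i)$ is again $C$-doubling: a ball $B_{d_i}(p,\rho)$ coincides as a set with $B_d(p,r_i\rho)$, so a covering of the latter by $C$ balls of $d$-radius $\frac{r_i\rho}{2}$ is exactly a covering of the former by $C$ balls of $d_i$-radius $\frac{\rho}{2}$. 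With this in hand I would apply the compactness Theorem above to extract a subsequence along which $(X,d_i,x)$ Gromov-Hausdorff converges to a complete $C$-doubling pointed space $(X_\infty,d_\infty,x_\infty)$; by the definition of GH tangent space this limit is a tangent space of $X$ at $x$, proving existence.

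For part (2) I would start from a countable family $\F=\{f_\alpha\}_{\alpha\in\A}$ that is uniformly Lipschitz, fix $x$ and $r_i\to 0$ as before, and form the rescaled, recentered functions $f_{i,\alpha}(\cdot)=\frac{f_\alpha(\cdot)-f_\alpha(x)}{r_i}$ on $(X,d_i,x)$. Two short computations feed the hypotheses of the second (unnumbered) compactness statement above: the Lipschitz constant of $f_{i,\alpha}$ measured in $d_i$ equals $\LIP_d(f_\alpha)$, since numerator and denominator of every difference quotient are both scaled by $\frac{1}{r_i}$, so these constants are uniformly bounded in $i$ for each $\alpha$; and $f_{i,\alpha}(x)=0$ for every $i$, so the basepoint values are trivially bounded. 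I would then invoke that statement to pass to a subsequence along which the tuples $(X,d_i,x,\F_i)$ GH converge to $(X_\infty,x_\infty,\U)$ with $\U=\{u_{f_\alpha}\}$. Because the convergence uses a single Hausdorff approximation and a single sequence of scales for all $\alpha$, the resulting $\U$ is by definition a \emph{compatible} collection of tangent functions of the $f_\alpha$ at $x$.

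The main point to get right — rather than a genuine obstacle — is the bookkeeping that both the space and all the functions converge along one common subsequence with one common Hausdorff approximation; this is precisely what the tuple-convergence statement above is designed to deliver, so once the scaling invariance of the doubling and Lipschitz constants is observed the corollary is immediate. I would also note explicitly that the same extraction serves both parts, so that the tangent space produced in (1) is exactly the one carrying the tangent functions in (2).
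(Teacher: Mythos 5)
Your proof is correct and follows exactly the route the paper intends: the paper derives the corollary in one line from the scaling invariance of doubling and Lipschitz constants together with the two compactness statements, and your argument simply fills in those same verifications. No issues.
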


\section{Finite dimensionality implies measurable 
differentiable structure}
	Our goal in this section is to prove
\begin{proposition}[cf.~Prop.~7.3.1, \cite{KeDS}]\label{prop-findim-coords}
	If the differentials have finite dimension of at most $N_0$ (see Definition
	\ref{def-findim}),
	then $X$ admits a measurable differentiable structure whose dimension is
	at most $N_0$.  
\end{proposition}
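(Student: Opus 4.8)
The plan is to build the patches by a greedy ``maximal independent set'' selection, organized by a pointwise dimension. The guiding principle is that a coordinate tuple $\mathbf{x}_\alpha$ spans the differentials on $X_\alpha$ as soon as two conditions hold: first, $\mathbf{x}_\alpha$ is itself independent (i.e.\ $X_\alpha\subseteq\ind(\mathbf{x}_\alpha)$), and second, no Lipschitz function can be adjoined to raise the dimension on a positive-measure piece of $X_\alpha$. Indeed, if $(f,\mathbf{x}_\alpha)$ is dependent at $x$ while $\mathbf{x}_\alpha$ is independent at $x$, then in a relation $\lambda_0(f(y)-f(x))+\lambda'\cdot(\mathbf{x}_\alpha(y)-\mathbf{x}_\alpha(x))=o(d(x,y))$ the coefficient $\lambda_0$ cannot vanish (else $\mathbf{x}_\alpha$ would be dependent at $x$), so $df^\alpha(x):=-\lambda'/\lambda_0$ realizes \eqref{eq-df}. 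Thus the whole problem reduces to covering $X$, up to a null set, by countably many measurable sets $X_\alpha$ each carrying an independent Lipschitz tuple of \emph{locally maximal} dimension. Restricting to the pieces of a countable partition, I may assume $\mu(X)<\infty$.

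To run the selection I first record that $\ind(\mathbf{f})$ is measurable for every Lipschitz tuple $\mathbf{f}=(f_1,\dots,f_N)$. By Definition \ref{def-dep}, $x\notin\ind(\mathbf{f})$ exactly when $\Lip_x(\lambda\cdot\mathbf{f})=0$ for some $\lambda\neq 0$, so $\ind(\mathbf{f})=\{x:\inf_{|\lambda|=1}\Lip_x(\lambda\cdot\mathbf{f})>0\}$. Since $\Lip_x$ is a seminorm on $\LIP(X)$ (Lemma \ref{lem-Lip-triangleineq}) and $\Lip_x f_i\le\LIP(f_i)$, the map $\lambda\mapsto\Lip_x(\lambda\cdot\mathbf{f})$ is Lipschitz with constant independent of $x$; hence the infimum over the unit sphere equals the infimum over a fixed countable dense subset, and measurability follows from Lemma \ref{lem-lip-Lip-meas}. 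Next, for measurable $A$ with $\mu(A)>0$ set $D(A):=\sup\{N:\ \mu(A\cap\ind(\mathbf{f}))>0 \text{ for some Lipschitz } N\text{-tuple }\mathbf{f}\}$. This is monotone in $A$, and the finite-dimensionality hypothesis forces $D(A)\le N_0$: an independent tuple of length $>N_0$ on a positive-measure set would have an independent $(N_0+1)$-subtuple there, contradicting Definition \ref{def-findim}.

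The construction now proceeds downward through the finitely many values $N_0\ge N\ge 0$. Working inside a set $Y$ with $D(Y)=N$, I greedily choose Lipschitz $N$-tuples $\mathbf{f}_1,\mathbf{f}_2,\dots$ and disjoint patches $X_k:=(Y\setminus\bigcup_{j<k}X_j)\cap\ind(\mathbf{f}_k)$, where at each stage $\mu(X_k)$ is at least half the supremum of $\mu((Y\setminus\bigcup_{j<k}X_j)\cap\ind(\mathbf{g}))$ over $N$-tuples $\mathbf{g}$. Each patch is independent on itself and satisfies $D(X_k)=N$ (lower bound because $\mathbf{f}_k$ is independent there, upper bound by monotonicity). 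If the leftover $Y':=Y\setminus\bigcup_k X_k$ carried an $N$-tuple independent on a subset of measure $\delta>0$, that tuple would be an admissible competitor at every stage, forcing $\mu(X_k)\ge\delta/2$ for all $k$; disjointness together with $\mu(X)<\infty$ makes this impossible, so $D(Y')\le N-1$. Recursing on $Y'$ lowers $N$, and the process terminates at $D=0$, where a single patch with the empty tuple (and $df^\alpha\equiv 0$) covers the remainder. The outcome is a countable disjoint family $\{(X_\alpha,\mathbf{x}_\alpha)\}$ with $\bigcup_\alpha X_\alpha=X$, $N(\alpha)=D(X_\alpha)\le N_0$, and $\mathbf{x}_\alpha$ independent on $X_\alpha$.

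It remains to verify condition (3). Fix a Lipschitz $f$ and a patch $X_\alpha$. Since $D(X_\alpha)=N(\alpha)$, the $(N(\alpha)+1)$-tuple $(f,\mathbf{x}_\alpha)$ has $\mu(X_\alpha\cap\ind(f,\mathbf{x}_\alpha))=0$, so for a.e.\ $x\in X_\alpha$ it is dependent while $\mathbf{x}_\alpha$ is independent; the dichotomy of the first paragraph then produces $df^\alpha(x)$ realizing \eqref{eq-df}. Uniqueness up to null sets is immediate, since two choices differ by a vector $\nu$ with $\Lip_x(\nu\cdot\mathbf{x}_\alpha)=0$, forcing $\nu=0$ by independence. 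The one genuinely technical point, which I expect to be the main obstacle, is the \emph{measurability} of $x\mapsto df^\alpha(x)$, since its construction at each point invoked a choice of relation $\lambda$. I would handle this by noting that $(x,a)\mapsto\Lip_x(f-a\cdot\mathbf{x}_\alpha)$ is jointly measurable (measurable in $x$ by Lemma \ref{lem-lip-Lip-meas}, Lipschitz in $a$ by the seminorm property), that its zero set projects onto a full-measure subset of $X_\alpha$, and that this zero is unique for a.e.\ $x$; a standard measurable-selection argument then yields a measurable $df^\alpha$ agreeing with the pointwise derivative almost everywhere. Together with the bound $N(\alpha)\le N_0$, this exhibits the desired measurable differentiable structure.
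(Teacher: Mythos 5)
Your proof is correct and follows essentially the same route as the paper's: a greedy half-the-supremum exhaustion into patches carrying independent tuples of locally maximal length, the same maximality observation that a dependence relation for $(f,\mathbf{x}_\alpha)$ must have nonzero $f$-coefficient and hence yields $df^\alpha$, the same seminorm argument for uniqueness, and the same device $h_x(\lambda)=\Lip_x(f-\lambda\cdot\mathbf{x}_\alpha)$ with a countable dense subset for measurability (your ``standard measurable-selection argument'' is precisely the paper's computation of $df^{-1}(K)$ via $\bigcap_n\bigcup_{\lambda\in K'}\{h_x(\lambda)<1/n\}$). The only difference is organizational: you stratify the exhaustion by the pointwise dimension $D$ from $N_0$ down to $0$, whereas the paper's Lemma~\ref{lem-decomp} extracts a maximal-dimension patch inside each leftover set and runs a single greedy loop over patches of varying dimension.
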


\begin{proof}
	We have $N_0$ fixed by the hypotheses.

	\begin{lemma}\label{lem-decomp}
		We assume the hypothesis of Proposition~\ref{prop-findim-coords}.
		Then, given any measurable $A \subset X$ with positive
		measure, we can find a measurable $V \subset A$ with positive measure
		and a function $\mathbf{x}:V\ra\R^N$
		so that $(V,\mathbf{x})$ is a coordinate patch.
	\end{lemma}

We now complete the proof, assuming Lemma \ref{lem-decomp}.
	Since $X$ is a doubling
	metric measure space it is $\sigma$-finite, so without loss of generality we may assume
	it has finite  measure.  Applying Lemma \ref{lem-decomp}, we 
	construct a sequence 
	of coordinate
	patches $(U_1,\mathbf{x}_1),\ldots,(U_i,\mathbf{x}_i),\ldots$ inductively as follows.
	Given $i\geq 0$ and charts $(U_1,\mathbf{x}_1),\ldots,(U_i,\mathbf{x}_i)$, if the union
	$\cup_{j\leq i}\,U_j$ has full
	measure in $X$, we stop; otherwise, let $\mathcal{C}$ be the collection
	of coordinate patches $(V,\mathbf{x})$ with 
	$V\subset X\setminus \cup_{j\leq i}\,U_j$, and choose  $(U_{i+1},\mathbf{x}_{i+1})
	\in \mathcal{C}$ such that 
	$\mu(U_{i+1})\geq \frac12\sup\{\mu(V)\mid (V,\mathbf{x})\in \C\}$.   If the resulting
	sequence of charts $\{U_j\}$ is infinite, then we have $\mu(U_j)\ra 0$ as $j\ra\infty$, because
	$\mu(X)<\infty$.
	The union $\cup_j\,U_j$ has full measure, else we 
	could choose a chart $(V,\mathbf{x})$
	where $V$ is a positive measure subset of $X\setminus \cup_j\,U_j$, and this 
	contradicts the choice of the $U_j$'s.

\end{proof}

	It remains to prove Lemma~\ref{lem-decomp}.  Before proceeding with this we
note that \eqref{eq-df} can be expressed more concisely as
\begin{equation}
	\Lip_x \left( f(\cdot) - df^\alpha(x) 
	\cdot \mathbf{x}_\alpha(\cdot) \right) = 0, 
	\text{ for } \mu\text{-a.e. } x 
	\in X_\alpha.
\end{equation}

	\begin{proof}[Proof of Lemma~\ref{lem-decomp}] 
		Consider the maximal $N$ so that there exists some positive
		measure set $V \subset A$, and some $N$-tuple of Lipschitz functions
		$\mathbf{x}$, so that $V \subset \ind(\mathbf{x})$,
		the set where $\mathbf{x}$ is not dependent.
		(Because of finite dimensionality, we have $0 \leq N \leq N_0$.)
		
		We want to show that $(V,\mathbf{x})$ is a coordinate patch.
		Take any Lipschitz function $f \in \mathrm{LIP}(X)$, and consider
		the $(N+1)$-tuple of functions $(\mathbf{x},f)$.  By the maximality of
		$N$ this is dependent almost everywhere in $V$,
		so for $\mu$-almost every $x \in V$ there exists 
		$\lambda(x) \in \R$ and $df(x) \in \R^N$
		so that
		\begin{equation}\label{eq-findcoords}
			\Lip_x \left( \lambda(x)f(\cdot)-df(x) \cdot \mathbf{x}(\cdot) \right) = 0.
		\end{equation}
		Since $V \subset \ind(\mathbf{x})$, we know that $\lambda(x) \neq 0$ almost everywhere,
		so, without loss of generality, we may assume that $\lambda(x) = 1$ everywhere.
		
		The uniqueness of $df$, up to sets of measure zero, follows from the fact that
		$\Lip_x(\cdot)$ is a semi-norm on the space of Lipschitz functions
		(Lemma \ref{lem-Lip-triangleineq}).
		Indeed, suppose that $df_1:V \ra \R^N$ and $df_2: V \ra \R^N$ both 
		satisfy \eqref{eq-findcoords} for almost every $x$.  Then
		\begin{multline*}
			\Lip_x\Bigl( \left(df_1(x)-df_2(x)\right) \cdot \mathbf{x}(\cdot) \Bigr)
			\\ \begin{aligned} &\leq \Lip_x\Bigl( f(\cdot)- df_1(x) \cdot \mathbf{x}(\cdot) \Bigr)
				+ \Lip_x\Bigl( f(\cdot)- df_2(x) \cdot \mathbf{x}(\cdot) \Bigr)
			\\ & = 0, \ \text{for } \mu \text{-a.e. } x.
			\end{aligned}
		\end{multline*}
		So, if $df_1$ and $df_2$ differed on a set of positive measure, then $\mathbf{x}$ would
		be dependent on that same set, but this is not possible.  Therefore $df_1 = df_2$ almost
		everywhere.
		
		It only remains to show that $df$ is measurable.  This follows if 
		$df^{-1}(K)$ is measurable for each compact $K \subset \R^N$.  We fix such a $K$ for
		the remainder of the proof.
		
		Consider the function $h_x : \R^n \ra \R$ given by
		\[ h_x(\la) :=  \Lip_x( f(\cdot)-\la\cdot\mathbf{x}(\cdot) ). \]
		The triangle inequality for $\Lip_x(\cdot)$ (Lemma~\ref{lem-Lip-triangleineq})
		implies that $h_x$ is continuous; in fact, for $\la,\la' \in \R^N$,
		\begin{align*}
			\abs{h_x(\lambda) - h_x(\lambda')} & \leq \Lip_x((\la-\la') \cdot \mathbf{x}) \\
				& \leq \sum_{1 \leq i \leq N} \abs{\la_i -\la_i'} \Lip_x(x_i) \\
				& \leq \Bigl(N \max_{1 \leq i \leq N} \LIP(x_i)\Bigr) \abs{\la-\la'}.
		\end{align*}

		Now set
 		\begin{equation*}
 			E := \left\lbrace x \in V \mid \exists \lambda \in K \text{ s.t. } h_x(\lambda) = 0 \right\rbrace.
 		\end{equation*}
		As we have seen, $df$ is uniquely defined up to a set of measure zero, so $df^{-1}(K)$
		equals $E$ less a set of measure zero.  Consequently, it suffices to show that $E$ is
		measurable.  Fix a dense countable subset $K'$ of $K$, and observe that
		\begin{align*}
			E & = \left\lbrace x \in V \mid \exists (\la_n)_{n\in\N} \subset K', \la\in K \text{ s.t. }
				h_x(\la_n) \ra 0, \la_n \ra \la \right\rbrace \\
				& = \bigcap_{n \in \N} \bigcup_{\lambda \in K'} 
				\left\lbrace x \in V \mid h_x(\la) < \tfrac{1}{n} \right\rbrace.
		\end{align*}
		The first equality follows from the continuity of $h_x$ and the density of $K'$
		in $K$.  The second equality follows from the compactness of $K$.
		Note that $h_x(\la)$ is a measurable function of $x$ for 
		fixed $\la \in \R^N$ (applying Lemma~\ref{lem-lip-Lip-meas}).  Therefore,
		$E$ is a measurable set, and we are done.
	\end{proof}

We note one consequence of the above proof.
\begin{lemma}\label{lem-sf-meas}
	Suppose $(X,d,\mu)$ is a Borel regular metric measure space, and that
	$\mathbf{x}$ is an $N$-tuple of real-valued Lipschitz functions on $X$.
	Then $\ind(\mathbf{x})$, the set where $\mathbf{x}$ is not dependent to
	first order, is a measurable set.
\end{lemma}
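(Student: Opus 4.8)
The plan is to recognize that this lemma is essentially a restatement of the measurability argument already carried out for the set $E$ in the proof of Lemma~\ref{lem-decomp}, now applied to the function $\lambda\mapsto\Lip_x(\lambda\cdot\mathbf{x})$ over the unit sphere. First I would unwind Definition~\ref{def-dep}: the condition $\lambda\cdot\mathbf{x}(y)-\lambda\cdot\mathbf{x}(x)=o(d(x,y))$ as $y\ra x$ is, directly from the definition of the upper pointwise Lipschitz constant, precisely the statement that $\Lip_x(\lambda\cdot\mathbf{x})=0$. Hence $x\notin\ind(\mathbf{x})$ exactly when there is some $\lambda\in\R^N\setminus\{0\}$ with $\Lip_x(\lambda\cdot\mathbf{x})=0$; and since $\Lip_x$ is homogeneous in $\lambda$, we may normalize $\lambda$ to lie in the unit sphere $S^{N-1}\subset\R^N$, which is compact. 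Thus, writing $D:=X\setminus\ind(\mathbf{x})$,
\[
	D = \left\lbrace x\in X \mid \exists\,\lambda\in S^{N-1} \text{ s.t. } \Lip_x(\lambda\cdot\mathbf{x})=0 \right\rbrace,
\]
and it suffices to show $D$ is measurable.

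Next I would record the two properties of $g_x(\lambda):=\Lip_x(\lambda\cdot\mathbf{x})$ needed to handle the quantifier over $\lambda$. On the one hand, for each fixed $\lambda$ the map $x\mapsto g_x(\lambda)$ is measurable by Lemma~\ref{lem-lip-Lip-meas}. On the other hand, by the triangle inequality for $\Lip_x(\cdot)$ (Lemma~\ref{lem-Lip-triangleineq}), exactly as in the estimate for $h_x$ in the proof of Lemma~\ref{lem-decomp},
\begin{align*}
	\abs{g_x(\lambda)-g_x(\lambda')} &\leq \Lip_x\bigl((\lambda-\lambda')\cdot\mathbf{x}\bigr) \\
		&\leq \Bigl(N\max_{1\leq i\leq N}\LIP(x_i)\Bigr)\abs{\lambda-\lambda'},
\end{align*}
so $g_x$ is continuous in $\lambda$, with a Lipschitz modulus independent of $x$.

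With these in hand I would fix a countable dense subset $K'\subset S^{N-1}$ and rewrite the condition defining $D$ as a countable set operation, just as was done for $E$: using the continuity of $g_x$ together with the compactness of $S^{N-1}$,
\[
	D = \bigcap_{n\in\N}\ \bigcup_{\lambda\in K'}\ \left\lbrace x\in X \mid g_x(\lambda) < \tfrac1n \right\rbrace.
\]
Each set in the inner union is measurable by the measurability of $x\mapsto g_x(\lambda)$, so $D$ is a countable intersection of countable unions of measurable sets, hence measurable; therefore $\ind(\mathbf{x})=X\setminus D$ is measurable. The only point requiring care — the sole ``obstacle,'' and one already dispatched in the earlier proof — is justifying the displayed identity for $D$: the inclusion $\subseteq$ uses that if $g_x(\lambda)=0$ for some $\lambda\in S^{N-1}$, then by density of $K'$ and continuity of $g_x$ we can pick $\lambda\in K'$ with $g_x(\lambda)<\tfrac1n$ for each $n$, while $\supseteq$ uses compactness of $S^{N-1}$ to extract a convergent subsequence of points $\lambda_n\in K'$ with $g_x(\lambda_n)\ra 0$, whose limit $\lambda\in S^{N-1}$ satisfies $g_x(\lambda)=0$ by continuity.
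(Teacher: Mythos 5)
Your proof is correct and takes essentially the same route as the paper: both reduce the existential quantifier over $\lambda\in\R^N\setminus\{0\}$ to a compact parameter set and then reuse the density/continuity/measurability argument for $E$ from the proof of Lemma~\ref{lem-decomp}. The only cosmetic difference is that you normalize to the unit sphere via the homogeneity of $\Lip_x(\cdot)$, whereas the paper writes $X\setminus\ind(\mathbf{x})$ as a countable union over the compact annuli $\{\lambda\in\R^N\mid\tfrac1n\leq|\lambda|\leq n\}$.
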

\begin{proof}
	This follows from the same argument that we used to prove that $E$ was measurable in
	the previous lemma.  Notice that
	\begin{align*}
		X \setminus \ind(\mathbf{x}) & = 
			\big\{ x \in X \mid \exists \lambda \in \R^N \setminus \{0\} 
				\text{ s.t. } \Lip_x(\lambda \cdot \mathbf{x}) = 0 \big\} \\
			& = \bigcup_{n \in \N} E_n,
	\end{align*}
	where
	\[
		E_n = \big\{ x \in X \mid \exists \lambda \in \R^N, 
				\text{ s.t. } \tfrac{1}{n} \leq |\lambda| \leq n, \text{ and }
				\Lip_x(\lambda \cdot \mathbf{x}) = 0 \big\}.
	\]
	Since the annulus $\{ \lambda \in \R^N \mid \tfrac{1}{n} \leq |\lambda| \leq n\}$
	is compact, the argument at the end of the proof of Lemma~\ref{lem-decomp}
	shows that $E_n$ is measurable, and this completes the proof.
\end{proof}

\section{A $\Lip$-$\lip$ inequality implies finite dimensionality}

In this section we prove the following statement, which perhaps is the
heart of the theorem.  
Throughout this section,
$(X,d,\mu)$ is a doubling metric measure space with
a $K$-Lip-lip bound, for fixed $K > 0$.
\begin{proposition}[Prop.~7.2.2, \cite{KeDS}]\label{prop-Liplip-findim}
	There exists an $N_0$, depending only on $K$ and the doubling constant,
	so that any $(N_0+1)$-tuple $\mathbf{f}$ of Lipschitz functions
	is dependent almost everywhere.
	
	In other words, $(X,d,\mu)$ is finite dimensional.
\end{proposition}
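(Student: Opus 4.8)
The plan is to set $N_0=N$, where $N$ is the dimension bound furnished by Lemma~\ref{thmquasilinearbound} applied with the given constant $K$ and with $C$ equal to the (uniform) doubling constant shared by all Gromov--Hausdorff tangent spaces of $X$ (recall that a measure-doubling space is metric-doubling with a constant depending only on the measure-doubling constant, and that rescaling preserves this, so $N_0$ depends only on $K$ and the doubling constant). Given an $(N_0+1)$-tuple $\mathbf{f}=(f_1,\dots,f_{N_0+1})$ of Lipschitz functions, the idea is to blow up at a typical point $x$, observe that the span of the resulting tangent functions consists of $K$-quasilinear functions on a $C$-doubling space, invoke Lemma~\ref{thmquasilinearbound} to force a nontrivial linear relation among the tangent functions, and then transfer this relation back to a first-order dependence of $\mathbf{f}$ at $x$.

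The one genuinely delicate point is that the relation produced by the blow-up involves a vector $\la\in\R^{N_0+1}$ not known in advance, whereas the a.e.\ statements at our disposal (the chain~\eqref{eqn-varlipLip} and the $K$-Lip-lip inequality) are available for one Lipschitz function at a time, hence for only countably many $\la$ at once. To get around this, fix a countable dense set $\{\la_j\}\subset\R^{N_0+1}$ and let $G$ be the full-measure set of points $x$ at which the conclusion of~\eqref{eqn-varlipLip} holds for every $g_j:=\la_j\cdot\mathbf{f}$ (a countable intersection of full-measure sets). The key observation is that all the relevant functionals depend continuously on $\la$: for fixed $x$, $\la\mapsto\Lip_x(\la\cdot\mathbf{f})$ is a seminorm by Lemma~\ref{lem-Lip-triangleineq} and hence continuous, and likewise $\la\mapsto\var_{y,r}(\la\cdot\mathbf{f}_\infty)$ and $\la\mapsto\LIP(\la\cdot\mathbf{f}_\infty)$ are seminorms; moreover the subadditivity of $\var_{x,r}$ together with $\var_{x,r}(u-v)\le\LIP(u-v)$ shows $|\lip_x u-\lip_x v|\le\LIP(u-v)$, so $\la\mapsto\lip_x(\la\cdot\mathbf{f})$ is even Lipschitz in $\la$. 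Consequently, both the inequality $\Lip_x(\la\cdot\mathbf{f})\le K\,\lip_x(\la\cdot\mathbf{f})$ and the $K$-quasilinearity of the tangent functions, known on the dense set $\{\la_j\}$, extend to all $\la\in\R^{N_0+1}$ at every $x\in G$.

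With this in hand the argument closes as follows. Fix $x\in G$ and, using Corollary~\ref{cor-tangents}, pass to a compatible collection of tangent functions $\mathbf{f}_\infty=(f_{1,\infty},\dots,f_{N_0+1,\infty})$ on a tangent space $X_\infty$, obtained from a single Hausdorff approximation and a single sequence of scales. Because the same blow-up data is used for all coordinates, $\la_j\cdot\mathbf{f}_\infty$ is a tangent function of $g_j$, so by the choice of $G$ it is $K$-quasilinear; by the continuity just discussed, every $\la\cdot\mathbf{f}_\infty$ is $K$-quasilinear, i.e.\ the linear span of $f_{1,\infty},\dots,f_{N_0+1,\infty}$ is a space of $K$-quasilinear functions on the $C$-doubling space $X_\infty$. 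By Lemma~\ref{thmquasilinearbound} this span has dimension at most $N_0$, so there is some $\la\in\R^{N_0+1}\setminus\{0\}$ with $\la\cdot\mathbf{f}_\infty\equiv0$.

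It remains to descend from $X_\infty$ back to $X$. Applying the left-hand inequality of~\eqref{eqn-upperlowerlipschitzcontrolvar} to the single Lipschitz function $g:=\la\cdot\mathbf{f}$, whose tangent function is $g_\infty=\la\cdot\mathbf{f}_\infty\equiv0$, gives $\lip_x g\le\var_{x_\infty,R}g_\infty=0$, hence $\lip_x(\la\cdot\mathbf{f})=0$. Since $x\in G$, the extended Lip-lip bound yields $\Lip_x(\la\cdot\mathbf{f})\le K\,\lip_x(\la\cdot\mathbf{f})=0$, which is exactly the statement that $\mathbf{f}$ is dependent (to first order) at $x$ with witness $\la\ne0$. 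As $G$ has full measure, $\mathbf{f}$ is dependent almost everywhere, proving the proposition with the stated $N_0$. I expect the main obstacle to be precisely the uniformity issue addressed in the second paragraph --- arranging a single full-measure set on which a relation for an a priori unknown $\la$ can be produced and transferred --- rather than any individual blow-up estimate, which is already packaged in~\eqref{eqn-varlipLip} and Lemma~\ref{thmquasilinearbound}.
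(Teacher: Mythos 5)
Your proposal is correct and follows essentially the same route as the paper: blow up at a point of a good set for the countable family of linear combinations $\{\la\cdot\mathbf{f}\}$, obtain a span of $K$-quasilinear tangent functions on a doubling tangent space, apply Lemma~\ref{thmquasilinearbound} to extract a nontrivial relation, and transfer it back via the Lip-lip/variation estimates. The only cosmetic difference is that you extend the key inequalities from the countable dense set of $\la$'s to all $\la$ by continuity of the seminorms at the outset, whereas the paper performs the equivalent rational approximation of the relation vector at the end using the triangle inequality for $\Lip_x$.
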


Suppose we fix $N$ Lipschitz functions $\mathbf{f} = (f_1,\ldots, f_N)$.
By Lemma~\ref{lem-sf-meas}, we know that $\ind(\mathbf{f})$,
the set of points where $\mathbf{f}$ is not dependent, is measurable,
and we assume that it has positive measure.  The proposition
will be proved if we can find a bound $N\leq N_0$.

Let $\F$ be the countable collection of all rational linear
combinations 
\[
	\F = \{ \lambda \cdot \mathbf{f} \mid \lambda \in \Q^N \} \subset \LIP(X).
\]
The rough idea is that we can take tangents to $X$ and $\F$ at a suitable point
to get a vector space of uniformly \emph{quasilinear} functions
that is, Lipschitz functions whose variation on any ball is comparable to
their Lipschitz constant.  The doubling condition then provides an
an upper bound for the size of this vector space, and hence of $N$.

\subsection{Finding good tangent functions}

\begin{definition}
If $f$ is  a Lipschitz function  and $\eps>0$,
 a subset $Y\subset X$
is {\em $\eps$-good for $f$} if there is an $r_0\in (0,\infty)$ such that 
if $r\in (0,r_0)$ and $x\in Y$, then
\begin{equation}\label{eq-eps-good}
\frac{1}{K}\Lip_xf-\eps\leq \lip_x f-\eps
\leq \var_{x,r}\,f\leq \Lip_x f+\eps\,.
\end{equation}
The set $Y$ is {\em good for $f$} if it is $\eps$-good
for $f$, for all $\eps>0$.
If $\F$ is a collection of functions, then
the set $Y$ is {\em $\eps$-good for $\F$} (respectively
good for $\F$) if
it is $\eps$-good (respectively good) for every $f\in \F$.
\end{definition}

\begin{lemma}\label{lem-good-sets}
	Suppose $Y_0\subset X$ is a measurable subset of finite measure and $\eps>0$.
	Given a Lipschitz function $f$, for all $\delta >0$ there exists $Y \subset Y_0$ 
	so that $\mu(Y_0 \setminus Y) < \delta$ and $Y$ is $\eps$-good for $f$.
	
	Consequently, given a countable collection of Lipschitz functions $\mathcal{F}$,
	neglecting a set of arbitrarily small measure
	we can find $Y \subset Y_0$ so that $Y$ is good for $\mathcal{F}$.
\end{lemma}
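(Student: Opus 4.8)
The plan is to prove the first statement by an Egorov-type uniformization argument, and then to deduce the second by exhausting the countable family while discarding only a summable amount of measure.

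First I would strip off the part of the inequality chain that carries no content as $r \to 0$. The outer inequality $\tfrac{1}{K}\Lip_x f - \eps \le \lip_x f - \eps$ is just the $K$-Lip-lip bound $\Lip_x f \le K\lip_x f$, which holds for $\mu$-a.e.\ $x$ and does not involve $r$; let $Z$ be the full-measure set on which it holds. The real task is to find a single radius $r_0$ so that the two middle inequalities $\lip_x f - \eps \le \var_{x,r} f \le \Lip_x f + \eps$ hold simultaneously for every $x$ in a large subset of $Y_0 \cap Z$ and every $r \in (0,r_0)$. For each individual $x$ the definitions $\lip_x f = \liminf_{r\to 0}\var_{x,r} f$ and $\Lip_x f = \limsup_{r\to 0}\var_{x,r} f$ produce some $r(x)>0$ for which these hold; the only difficulty is to make $r(x)$ uniform over a set of nearly full measure.

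Next I would set, for $n \in \N$,
\[
A_n = \Bigl\{ x \in Y_0 \cap Z : \lip_x f - \eps \le \var_{x,r} f \le \Lip_x f + \eps \text{ for all } r \in (0, 1/n) \Bigr\}.
\]
These sets increase with $n$ (shrinking the interval of admissible radii weakens the constraint), and by the previous paragraph every $x \in Y_0 \cap Z$ lies in some $A_n$, so $\bigcup_n A_n = Y_0 \cap Z$. To see each $A_n$ is measurable I would replace the condition for all $r \in (0,1/n)$ by the condition for all \emph{rational} $r$ in this range: the comparison $(s-\eps')\var_{x,s-\eps'}f \le s\,\var_{x,s}f \le (s+\eps')\var_{x,s+\eps'}f$ from the proof of Lemma~\ref{lem-lip-Lip-meas} shows that $\sup_{0<r<1/n}\var_{x,r}f$ and $\inf_{0<r<1/n}\var_{x,r}f$ are unchanged if $r$ ranges only over $\Q$. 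Since $\var_{x,r}f$ is lower semicontinuous in $x$ for fixed $r$, and $\lip_x f,\Lip_x f$ are measurable by Lemma~\ref{lem-lip-Lip-meas}, this exhibits $A_n$ via countably many measurable conditions. As $\mu(Y_0)<\infty$ and $\mu(Y_0 \cap Z) = \mu(Y_0)$, continuity of measure from below gives $\mu(A_n)\to\mu(Y_0)$, so some $A_n$ satisfies $\mu(Y_0 \setminus A_n) < \delta$; taking $Y = A_n$ and $r_0 = 1/n$ proves the first statement.

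Finally, for the consequence I would enumerate $\F = \{f_k\}$ together with the tolerances $\eps = 1/m$ as a single sequence of pairs $(k_j,m_j)_{j\in\N}$. Starting from $Y^{(0)}=Y_0$, at step $j$ I apply the first part to $f_{k_j}$ with tolerance $1/m_j$ inside the (measurable, finite-measure) set $Y^{(j-1)}$, discarding measure less than $\delta 2^{-j}$, to obtain $Y^{(j)}\subset Y^{(j-1)}$ that is $(1/m_j)$-good for $f_{k_j}$. Setting $Y=\bigcap_j Y^{(j)}$ gives $\mu(Y_0\setminus Y)\le\sum_j\mu(Y^{(j-1)}\setminus Y^{(j)})<\sum_j \delta 2^{-j}=\delta$. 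Two elementary remarks close the argument: $\eps$-goodness (with its radius $r_0$) is inherited by any subset, and $\eps$-good implies $\eps'$-good for $\eps'\ge\eps$ since the bounds only widen; hence $Y$, being contained in each $Y^{(j)}$, is $(1/m)$-good for $f_k$ for every $k,m$, i.e.\ good for $\F$. The hard part throughout is the uniformization/measurability in the first part, namely passing from the pointwise radii $r(x)$ to a single $r_0$ on a set of nearly full measure and confirming that the $A_n$ are measurable despite being cut out by an a priori uncountable family of constraints on $r$; the remainder is bookkeeping with continuity of measure and the summability of $\sum 2^{-j}$.
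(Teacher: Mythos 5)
Your proof is correct and follows essentially the same route as the paper: the outer inequality comes from the a.e.\ Lip-lip bound, and the uniform radius $r_0$ on a nearly full-measure set is obtained by an Egoroff-type argument (you prove Egoroff by hand via the increasing sets $A_n$ and rational radii, where the paper simply invokes Egoroff's theorem applied to the convergence in \eqref{eq-Lip-meas}). The countable-intersection bookkeeping for the second part matches what the paper leaves implicit.
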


\begin{proof}[Proof of Lemma~\ref{lem-good-sets}]
	The first inequality of \eqref{eq-eps-good} follows, almost everywhere,
	from the Lip-lip inequality \eqref{eq-Liplip}.

	We saw $\Lip_x f$ was a measurable function of $x$ using the pointwise
	convergence of functions in equation \eqref{eq-Lip-meas}.  (A similar equation
	holds for $\lip_x f$.)
	By Egoroff's theorem, after neglecting a subset of arbitrarily 
	small measure, we may obtain a measurable set $Y\subset Y_0$ where
	the convergence is uniform. 
	 This completes the proof of
	\eqref{eq-eps-good}.
\end{proof}

As in the introduction to this section, we fix $N$ Lipschitz functions 
$f_1,\ldots, f_N$, and let $\F$ be the countable collection of all 
rational linear combinations of these functions.

Let $Y_0\subset X$
be a finite measure subset.  By the above
reasoning, and Lusin's theorem,  after neglecting a
subset of arbitrarily small measure, we may obtain a
measurable subset $Y_1 \subset Y_0$ such that 
\begin{itemize}
\item
for all $f\in\F$, the function $\Lip_x f$ (viewed as a function of $x$) is continuous on $Y_1$, and
\item the set $Y_1$ is  good for $\F$. 
\end{itemize}

\begin{lemma}\label{lem-blow-up-to-ql}
Suppose $x\in Y$ is a density point of the above set $Y_1$. 
Let $X_\infty$ denote a tangent of $X$ at $x$,  and $\{u_f:X_\infty\ra \R\mid f\in\F\}$ 
denote a compatible collection of tangent functions.
Then 
\begin{enumerate}
\item
\quad$\LIP u_f\leq \Lip_x f$.

\item 
\quad For every $p\in X_\infty$, and every $r\in(0,\infty)$,
$$
\Lip_x f\leq K\var_{p,r}\,u_f\,.
$$
\end{enumerate}
Thus the functions $u_f$ are uniformly quasilinear (Definition \ref{def-quasilinear}),
and have global Lipschitz constant comparable to $\Lip_xf$.
\end{lemma}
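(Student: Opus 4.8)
The plan is to unwind the definitions of the tangent space and tangent functions at a density point $x$ of $Y_1$, and then transfer the good-set inequalities \eqref{eq-eps-good}—which hold for $x$ and nearby points—across to the limiting object $u_f$. Recall that for each $f\in\F$ the tangent function $u_f$ is the Gromov-Hausdorff limit of the rescaled functions $f_{i,\al}(\cdot)=\frac{f(\cdot)-f(x)}{r_i}$ on the rescaled spaces $(X,\frac{1}{r_i}d,x)$, where $r_i\to 0$. The two key numerical facts I will exploit are: the variation of $f$ on a ball $B(x,\rho r_i)$ in $(X,d)$ becomes, after rescaling by $1/r_i$, the variation of $f_{i}$ on $B(x,\rho)$ in $(X,d_i)$; and the value $\var_{x,\rho}\,u_f$ is recovered as the limit of these rescaled variations by GH convergence. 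Since $x$ is a density point of $Y_1$ and $Y_1$ is good for $\F$, the inequalities in \eqref{eq-eps-good} apply at $x$ for all small radii.

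First I would prove (2). Fix $p\in X_\infty$ and $r\in(0,\infty)$, and pull $p$ back through the Hausdorff approximation $\phi_i$ to points $p_i\in X_i$ with $\phi_i$-images converging to $p$. The variation $\var_{p,r}\,u_f$ is (up to the GH error) the limit of $\var_{p_i,r}\,f_i$ computed in $(X,d_i)$; unwinding the rescaling, this is $\var_{\tilde p_i,\,r r_i}\,f$ in the original metric, where $\tilde p_i\in X$ sits at $d$-distance about $r\,r_i$ from $x$, hence tends to $x$. The subtlety is that the good-set inequality is guaranteed at $x$ and at points of $Y_1$, but $\tilde p_i$ need not lie in $Y_1$. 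Here I would use that $x$ is a \emph{density} point of $Y_1$: almost all of $B(x,\rho)$ (for the relevant small $\rho=r r_i$ blown up) lies in $Y_1$, so one can select the basepoints $p_i$, or slightly perturb them within the ball without changing the GH limit, to lie in $Y_1$, and then apply the lower bound $\frac{1}{K}\Lip_x f\le \var_{\cdot,\,\cdot}\,f$ from \eqref{eq-eps-good}. Passing to the limit and letting $\eps\to 0$ yields $\Lip_x f\le K\,\var_{p,r}\,u_f$.

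For (1), I would argue that $\var_{p,r}\,u_f\le \Lip_x f$ for every $p,r$ by the same blow-up, now using the upper bound $\var_{\cdot,\cdot}\,f\le \Lip_x f+\eps$ of \eqref{eq-eps-good} together with the density-point selection of basepoints in $Y_1$; since a Lipschitz function on a length-type space has $\LIP(u_f)=\sup_{p,r}\var_{p,r}\,u_f$ (and in any case $\var_{p,r}\,u_f\le\LIP(u_f)$ always, with the reverse supremum controlling the Lipschitz constant up to the geometry), this gives $\LIP\,u_f\le \Lip_x f$. Combining (1) and (2) gives, for every ball $B(p,r)\subset X_\infty$,
\[
\var_{p,r}\,u_f\;\ge\;\frac{1}{K}\,\Lip_x f\;\ge\;\frac{1}{K}\,\LIP\,u_f,
\]
which is exactly $K$-quasilinearity in the sense of Definition \ref{def-quasilinear}, and shows the global Lipschitz constant of $u_f$ is comparable to $\Lip_x f$.

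The main obstacle I anticipate is the basepoint-selection step: the good-set inequalities are only known on $Y_1$, whereas the blow-up naturally produces basepoints $\tilde p_i$ that drift away from $x$ within balls of radius $\sim r\,r_i$. Controlling this requires the density-point hypothesis on $x$ to guarantee that $Y_1$ occupies asymptotically full measure in these shrinking balls, and then either choosing the pullback points $p_i$ inside $Y_1$ or using the continuity of $\Lip_\cdot f$ on $Y_1$ (the first bullet in the construction of $Y_1$) to replace $\Lip_{\tilde p_i} f$ by $\Lip_x f$ in the limit. Handling the interchange of the $i\to\infty$ GH limit with the $\eps\to 0$ limit, and verifying that perturbing basepoints within the ball leaves the GH limit of the variation unchanged, is where the care is needed; everything else is a direct translation of \eqref{eq-eps-good} through the rescaling.
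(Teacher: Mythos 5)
Your proposal is correct and follows essentially the same route as the paper: blow up the variations, use the density-point hypothesis (with doubling) to push the relevant basepoints into $Y_1$, invoke the $\eps$-good inequalities there, and use the continuity of $\Lip_{\cdot}f$ on $Y_1$ to replace $\Lip_{p_i}f$ by $\Lip_x f$ in the limit. The paper implements your "perturb the pullback points into $Y_1$" step by constructing auxiliary maps $\phi_i'$ landing in $Y_1$ that are uniformly close to the Hausdorff approximations $\phi_i$, which is exactly the device your sketch calls for.
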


\begin{proof}
	Fix a Hausdorff approximation
	\[ \{ \phi_i: (X_\infty,d_\infty,x_\infty) \ra (X,d_i,x)\}_{i\in\N}, \]
	where $d_i = \frac{1}{r_i}d$ and $r_i \ra 0$.
	As $x$ is a point of density for $Y$, and $\mu$ is doubling, we can
	find maps
	\[ \{ \phi_i': (X_\infty,d_\infty,x_\infty) \ra (Y,d_i,x)\}_{i\in\N},\]
	so that $d_i(\phi_i(\cdot),\phi_i'(\cdot))$
	converges to zero uniformly on compact sets.
	
	Suppose we fix $p \neq q$ in $X_\infty$, $f\in\F$, and $\eps>0$.
	Let $p_i = \phi_i'(p), q_i = \phi_i'(q) \in Y$. 
	Notice that $d(p_i,q_i)\ra 0$ as $i \ra \infty$.
	
	For all sufficiently large $i$ we have, using the fact that $f$ is Lipschitz,
	\begin{equation}\label{eq-goodsets}
		\frac{\abs{u_f(p)-u_f(q)}}{d_\infty(p,q)}
		\leq \frac{\abs{\tfrac{1}{r_i}f(p_i) - \tfrac{1}{r_i}f(q_i)}}{
				\tfrac{1}{r_i}d(p_i,q_i)} +\eps.
	\end{equation}
	
	Since $Y$ is $\eps$-good for $f$, there exists $r_0$ so that
	\eqref{eq-eps-good} holds.  To prove (1), use \eqref{eq-goodsets}
	to see that
	\begin{align*}
		\frac{\abs{u_f(p)-u_f(q)}}{d_\infty(p,q)}
			& \leq \var_{p_i,(1+\eps)d(p_i,q_i)} f + \eps \\
			& \leq \Lip_{p_i} f + 2\eps \text{, by \eqref{eq-eps-good}.}
	\end{align*}
	Since $\Lip_x f$ is continuous on $Y$, and $p_i \ra x$ in the metric $d$,
	we see that
	\[ \frac{\abs{u_f(p)-u_f(q)}}{d_\infty(p,q)} \leq \Lip_x f + 2\eps, \]
	but $\eps$ was arbitrary, and so were $p$ and $q$, so (1) is proved.
	
	To see (2), fix $\eps>0$ and take $p_i$ as before.
	Now choose $a_i \in B(p_i,(r-\eps)r_i) \subset (X,d)$ so that
	\[
		\var_{p_i,(r-\eps)r_i} f \leq \frac{\abs{f(p_i)-f(a_i)}}{(r-\eps)r_i}+\eps.
	\]
	For sufficiently large $i$, at a cost of adding another
	$\eps$ to the right	hand side, we can assume that
	$a_i \in Y$, and that $a_i = \phi_i'(v_i)$, for some
	$v_i \in B(p,r)$.
	Furthermore, since $f \circ \phi_i' :X_\infty\ra\R$
	converges to $u_f$ pointwise, and these functions are uniformly
	Lipschitz, the convergence is uniform on compact sets.  Therefore
	for sufficiently large $i$,
	\begin{equation}\label{eq-var-approx} 
		\var_{p_i,(r-\eps)r_i} f \leq 
			\frac{\abs{u_f(p)-u_f(v_i)}}{r-\eps}+3\eps
		\leq \frac{r}{r-\eps} \var_{p,r} u_f +3\eps. 
	\end{equation}
	But by the continuity of $\Lip_x f$ on Y and equation \eqref{eq-eps-good},
	\begin{equation}\label{eq-Lip-var-bound}
		\Lip_x f = \lim_{i\ra\infty} \Lip_{p_i} f
			\leq \lim_{i\ra\infty} K \left( 
				 \var_{p_i,(r-\eps)r_i} f + \eps \right).
	\end{equation}
	Since $\eps>0$ was arbitrary, after combining
	\eqref{eq-var-approx} and \eqref{eq-Lip-var-bound},
	we are done.
\end{proof}

\subsection{Bounding the dimension of the space of tangent functions}

We say that $T \subset X$ is a \emph{$c$-net} if
the $c$-neighborhood of $T$ is $X$.  If in addition every 
two distinct points of $T$ are at least $c$ apart,
we say that $T$ is a \emph{(maximal) $c$-separated net}.

\begin{lemma}
\label{thmquasilinearbound}
Suppose $V$ is a linear space of $K$-quasilinear
functions on a metric space $Z$.
\begin{enumerate}
\item 
\label{itemgeneral}If  some
$r$-ball in $Z$ contains a finite  $\frac{r}{4K}$-net $T$,
then $\dim V\leq |T|$. 
\item If $Z$ is $C$-doubling, 
then $\dim V \leq (AK)^{\log_2 C}$,
where $A$ is a universal constant. 
\end{enumerate}
\end{lemma}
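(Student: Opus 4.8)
The plan is to prove (1) by a linear-algebra pigeonhole argument applied to the evaluation map on the net $T$, and then to deduce (2) from (1) by manufacturing a small net inside a single ball using the doubling condition.

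For (1), I fix the $r$-ball $B(p,r)$ containing the $\frac{r}{4K}$-net $T$ and consider the linear evaluation map $\Phi\colon V\ra\R^{T}$, $\Phi(u)=(u(t))_{t\in T}$. If $\dim V>\abs{T}$, then $\Phi$ has nontrivial kernel, so there is a nonzero $u\in V$ vanishing on all of $T$; I will show this is impossible. Since $p\in B(p,r)$ and every point of $B(p,r)$ lies within $\frac{r}{4K}$ of $T$, for any $y\in B(p,r)$ I choose $t\in T$ with $d(y,t)\leq\frac{r}{4K}$ and estimate $\abs{u(y)}=\abs{u(y)-u(t)}\leq\LIP(u)\,\frac{r}{4K}$; applying this to both $y$ and $p$ gives $\abs{u(y)-u(p)}\leq\LIP(u)\,\frac{r}{2K}$, whence $\var_{p,r}u\leq\frac{1}{2K}\LIP(u)$. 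On the other hand, $K$-quasilinearity (Definition~\ref{def-quasilinear}) forces $\var_{p,r}u\geq\frac{1}{K}\LIP(u)$. Combining the two inequalities yields $\LIP(u)=0$, so $u$ is constant; as $u$ vanishes on the nonempty set $T$, we conclude $u\equiv 0$, a contradiction. Hence $\dim V\leq\abs{T}$.

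For (2), I fix any ball $B(p,r)\subset Z$ (a $C$-doubling space is totally bounded, so finite nets exist) and let $T\subset B(p,r)$ be a maximal $\frac{r}{4K}$-separated subset, which is automatically a $\frac{r}{4K}$-net of $B(p,r)$ contained in the ball. To bound $\abs{T}$ I iterate Definition~\ref{def-doublingmetricspace}: $B(p,r)$ is covered by at most $C^{k}$ balls of radius $\frac{r}{8K}$, where $k=\lceil\log_2(8K)\rceil$. Each such ball has diameter less than $\frac{r}{4K}$ and so contains at most one point of the $\frac{r}{4K}$-separated set $T$, giving $\abs{T}\leq C^{k}$. Using $k\leq\log_2(16K)$ together with the identity $C^{\log_2 a}=a^{\log_2 C}$, this is at most $(16K)^{\log_2 C}$, so part (1) yields $\dim V\leq(16K)^{\log_2 C}$ and the universal constant $A=16$ works.

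The only real idea is the evaluation argument in (1): a function small on a sufficiently fine net is small everywhere relative to its global Lipschitz constant, which quasilinearity forbids unless the function is constant. Part (2) is routine doubling bookkeeping; the points to watch are the total boundedness needed to guarantee finite nets and the careful tracking of constants to land on the exponent $\log_2 C$.
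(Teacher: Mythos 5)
Your proof is correct and takes essentially the same route as the paper: injectivity of the evaluation map on the net, established by showing that a quasilinear function vanishing on a $\frac{r}{4K}$-net has $\LIP(u)=0$, followed by the standard doubling count $|T|\leq (16K)^{\log_2 C}$ for part (2). The only cosmetic difference is that you compare $\var_{p,r}u\leq\frac{1}{2K}\LIP(u)$ directly against quasilinearity, whereas the paper runs the same inequalities as a contraction on $\left\|u\restr_B\right\|_{L^\infty}$; and your aside that a $C$-doubling space is ``totally bounded'' should say that its balls are totally bounded, which is all you use.
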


{\em Proof of (1).}
After rescaling, we may assume that $r=1$.  Let $B=B(x,r)=B(x,1)$,
and let $T\subset B$ be a maximal
$\frac{1}{4K}$-separated net.

Suppose $u\in V$ is in the kernel of the restriction map
$V\subset L^\infty(B)\ra L^\infty(T)$.  If $x\in B$, there is a $t\in T$
with $d(t,x)< \frac{1}{4K}$, so
\begin{align*}
	|u(x)| & =|u(x)-u(t)|\leq \LIP(u)\,d(x,t) \\
		& \leq K(\var_{B}\,u) \cdot\frac{1}{4K}
			\leq \frac12 \left\| u\restr_B \right\|_{L^\infty}\,.
\end{align*}
This implies that 
$$
\left\| u\restr_B \right\|_{L^\infty}\leq \frac12 \left\| u\restr_B \right\|_{L^\infty},
$$
forcing $\|u\restr_B\|_{L^\infty}=0$.  By quasilinearity,
we get $u\equiv 0$.   Thus the restriction map is injective,
and $\dim V\leq \dim L^\infty(T)=|T|$.  

{\em Proof of (2).}
The $C$-doubling condition implies that if $B\subset Z$
is a unit ball, there is a $\frac{1}{4K}$-net $T\subset B$
with $|T| \leq (16K)^{\log_2 C}$.
Then Part (1) applies. \qed

\subsection{Bounding the dimension of the differentials}
As stated in the introduction to this section,
we assume that $\ind(\mathbf{f})$ is a measurable set of
positive measure.

Using Lemmas~\ref{lem-good-sets} and \ref{lem-blow-up-to-ql}
(applied to $Y_0 = \ind(\mathbf{f})$)
we can take a GH tangents to $X$ and $\F$ at some $x \in \ind(\mathbf{f})$
to find a GH tangent space $Z = X_\infty$ with
a compatible family of tangent functions $\{u_f\mid f\in \F\}$.
Note that this family 
is the span over $\Q$ of $\{u_{f_1},\ldots,u_{f_N}\}$.
Since these are all $K$-quasilinear for a fixed $K$,
the same is true of the span over $\R$ of
$\{u_{f_1},\ldots,u_{f_N}\}$.  

We suppose for a contradiction that
$N > (AK)^{\log_2 C}$.
By Lemma \ref{thmquasilinearbound}, the functions 
$\{u_{f_1},\ldots,u_{f_N}\}$ satisfy a nontrivial linear
relation $\sum_i\;b_i u_{f_i}=0$ with real coefficients.
Approximating the vector $b=(b_1,\ldots,b_N)\in \R^N$
with a sequence of rational vectors 
$(a_{1,k},\ldots,a_{N,k})\in \Q^n$,
we get that the sequence of linear combinations
$v_k\defeq \{\sum_i\;a_{i,k}\;u_{f_i}\}$ tends to 
zero uniformly on bounded subsets of $X_\infty$.  
From the construction of the $u_f$'s,
this means that $\Lip_x( \sum_i\;a_{i,k}\; f_i)\ra 0$.  But
then
\begin{multline*}
\Lip_x\biggl(\sum_i\;b_i\,f_i\biggr)
\\ \leq \limsup_{k\ra\infty}\;
\left(\Lip_x\biggl(\sum_i\;a_{i,k}\,f_i\biggr)
+\Lip_x\biggl(\sum_i(b_i-a_{i,k})\,f_i\biggr)\right)
\\
\leq \limsup_{k\ra\infty}\;
\left(\Lip_x\biggl(\sum_i\;a_{i,k}\,f_i\biggr)
+\sum_i\;|b_i-a_{i,k}|\,\LIP f_i\right) \;=\;0\,.
\end{multline*}
  Hence the $f_i$'s
are dependent to first order at $x$, contradicting our assumption.
\qed

\section{A Poincar\'e inequality implies a $\Lip$-$\lip$ inequality}\label{sec-pi-Liplip}

\begin{definition}
\label{def-pidefinition}
	Fix $p \geq 1$. 
	A metric measure space $(X,\mu)$ admits a $p$-Poincar\'e inequality
	(with constant $L \geq 1$)
	if every ball in $X$ has positive and finite measure, and for every
	$f \in \LIP(X)$ and every ball $B = B(x,r)$
	\begin{equation}\label{eq-pi}
		\dashint_B | f - f_B | d\mu \leq 
			Lr \left( \dashint_{LB} (\lip_x f)^p d\mu(x) \right)^{1/p}.
	\end{equation}
\end{definition}

This is is equivalent to the usual definition
of a Poincar\'e inequality (see \cite[(4.3)]{Ch99}, and \cite{KeMeasPI}).
(Note that $\lip_x f$ is an upper gradient for $f$.)

The goal of this section is the following
proposition.

\begin{proposition}[Prop.~4.3.1, \cite{KeDS}]\label{prop-pi-Liplip}
	Suppose $X$ admits a $p$-Poincar\'e inequality (with constant $L\geq 1$) for some
	$p \geq 1$.  (See Section~\ref{sec-pi-Liplip} for the definition.)
	Then $X$ has a $K$-Lip-lip bound \eqref{eq-Liplip}, where $K$
	depends only on $L$ and the doubling constant of $X$.
\end{proposition}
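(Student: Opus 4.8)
The plan is to show that the \emph{bad set} $B_K=\{x:\Lip_x f>K\,\lip_x f\}$ is $\mu$-null once $K$ is a large constant depending only on $L$, $p$ and the doubling constant. Write $g=\lip f$; this is a Borel function by Lemma~\ref{lem-lip-Lip-meas}, and $g\le\LIP(f)$ everywhere. By a routine decomposition of $B_K$ according to the dyadic values of $\lip_x f$, it suffices to prove that for each pair $0\le\tau<\sigma$ with $\sigma\ge K\tau$ the set $S=\{x:\lip_x f\le\tau,\ \Lip_x f\ge\sigma\}$ is null, the case $\tau=0$ being handled as the limit $\tau\downarrow 0$. First I would fix such a pair, assume for contradiction that $\mu(S)>0$, and choose $x_0\in S$ that is simultaneously a density point of $S$ and a Lebesgue point of $g^p$ with $g(x_0)^p\le\tau^p$; almost every point of $S$ qualifies, by the density and Lebesgue differentiation theorems (\cite{Fed}).

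The engine is the averaged Poincar\'e inequality (Definition~\ref{def-pidefinition}), which bounds the \emph{average} oscillation of $f$ on a ball by an average of $g^p$ on a dilated ball; two consequences feed the contradiction. Telescoping the inequality over the dyadic balls $B(x_0,2^{-j}r)$ \emph{centered at $x_0$} gives the standard bound $|f_{B(x_0,r)}-f(x_0)|\le C r\,\big(\sup_{s\le\Lambda r}\dashint_{B(x_0,s)}g^p\,d\mu\big)^{1/p}$, where $\Lambda=\Lambda(L)$; since $x_0$ is a Lebesgue point of $g^p$, the right side is at most $C\tau r(1+o(1))$ as $r\to 0$, so the average $f_{B(x_0,r)}$ tracks $f(x_0)$ to within $C\tau r$. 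On the other hand, since $\Lip_{x_0}f\ge\sigma$ there are $r_k\downarrow 0$ and points $z_k\in B(x_0,r_k)$ with $|f(z_k)-f(x_0)|\ge\tfrac{\sigma}{2}r_k$: thus $f$ genuinely oscillates by $\ge\tfrac{\sigma}{2}r_k$ across $B(x_0,r_k)$ while its average stays within $C\tau r_k$ of $f(x_0)$. Combining these with Chebyshev's inequality and the Poincar\'e bound on $\dashint_{B(x_0,r_k)}|f-f_{B(x_0,r_k)}|\,d\mu$ shows that the set of $y\in B(x_0,r_k)$ with $|f(y)-f_{B(x_0,r_k)}|\ge c\sigma r_k$ has relative measure at most $C L\tau/\sigma$, which is small exactly when $K=\sigma/\tau$ is large.

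The main obstacle, and the technical heart of the argument, is the passage from this \emph{averaged} information to the \emph{pointwise} quantity $\Lip_{x_0}f$: the Poincar\'e inequality controls $L^1$-oscillation while $\Lip$ is a supremum, and (as the case $X=\R$, where $\lip=\Lip$ only \emph{almost} everywhere, already shows) one cannot bound $\Lip_x f$ by centered averages of $g$ at an individual point, only on a full-measure set. I would resolve this by converting the coexistence of a genuine large-oscillation point $z_k$ with a small bad-oscillation set into a lower bound on the \emph{density} of $\{x:\lip_x f\ge c\sigma\}$ inside $B(x_0,r_k)$: to rise by $\ge\tfrac{\sigma}{2}r_k$ at $z_k$ while the average deviation is only $O(\tau r_k)$, the function must have large slope on a subset of definite relative measure. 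Quantifying this \emph{scale-invariantly} — so that the resulting constants involve only $L$, $p$ and the doubling constant and crucially \emph{not} $\LIP(f)$ — is what forces a stopping-time/maximal-function selection of the relevant subscale rather than a one-scale estimate. Running the Poincar\'e inequality at the selected scale then produces a subset of $B(x_0,r_k)$ of relative measure bounded below by some $c_0(L,p,C)>0$ on which $\lip f\ge c\sigma>\tau$; for $K$ large this persists as $k\to\infty$ and contradicts $x_0$ being a density point of $S\subseteq\{x:\lip_x f\le\tau\}$.

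Finally I would record that every estimate above is scale-invariant and uses only $L$, $p$ and the doubling constant, so the threshold $K$ has exactly the asserted dependence; this is the form of the Lip-lip bound \eqref{eq-Liplip} needed to feed Proposition~\ref{prop-Liplip-findim} and hence Theorem~\ref{thm-main}.
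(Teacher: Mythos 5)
Your strategy is viable, but it is a genuinely different route from the one the paper takes. The paper first shows (Appendix~\ref{sec-appendix}) that the Poincar\'e inequality forces the completion of $X$ to be quasiconvex, and then, at an approximate continuity point $x$ of $\lip f$, chains ball averages along a discrete quasi-geodesic $x=p_1,\ldots,p_k=y$ with gaps $\la r$ and $k\leq Q/\la$: each consecutive difference $|\av_{B_{i+1}}f-\av_{B_i}f|$ is bounded by $C_1\la r(\lip_x f+\eps)$ via the Poincar\'e inequality on a common ball together with approximate continuity of $\lip f$ at $x$, and the two endpoint errors are at most $2\la r\,\LIP(f)$, which disappear as $\la\ra 0$. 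This yields the pointwise bound $\Lip_x f\leq QC_1\lip_x f$ directly at a.e.\ $x$, with no contradiction argument. You instead telescope radially over dyadic balls centered at $x_0$ and at the witness points $z_k$, and trade quasiconvexity for a density-point contradiction on the bad set. What the paper's approach buys is a clean one-scale estimate whose constant $K=QC_1$ is read off immediately; what yours buys is that it never invokes quasiconvexity or completeness of $X$.

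Two remarks on the step you rightly call the technical heart. First, it does work, but not with the constant dependence you assert: telescoping the Poincar\'e inequality over $B(z_k,2^{-j}r_k)$ gives $\tfrac{\si}{4}r_k\leq C r_k\sum_{j\geq 0}2^{-j}a_j$ with $a_j=\bigl(\dashint_{LB_j}(\lip f)^p\,d\mu\bigr)^{1/p}\leq\LIP(f)$, so the first scale $j^*$ with $a_{j^*}\geq c\si$ satisfies $2^{-j^*}\geq c'\si/\LIP(f)$, and the Chebyshev step on that ball gives $\{\lip f\geq c\si/2\}$ relative measure at least $\bigl((c\si)^p-(c\si/2)^p\bigr)/\LIP(f)^p$. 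Both lower bounds genuinely depend on $\LIP(f)$, contrary to your parenthetical claim that the density constant is $c_0(L,p,C)$. This is harmless: the density-point contradiction only needs a positive bound uniform in $k$, while the threshold $K$ itself comes from the comparisons $\si/4\leq C\tau+\ldots$ and $c\si/2>\tau$ and is therefore independent of $f$, as required. Second, the lower bound $2^{-j^*}\geq c'\si/\LIP(f)$ on the stopping scale must be recorded explicitly: without it the selected ball could shrink relative to $B(x_0,r_k)$ as $k\ra\infty$, the set where $\lip f$ is large could then have vanishing relative measure in $B(x_0,Cr_k)$, and the contradiction with $x_0$ being a density point of $S$ would evaporate. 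With those two points supplied, your argument closes and gives the stated dependence of $K$.
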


We will use the following: 
\begin{lemma}
The space $(X,\mu)$ is given as above.
Suppose $A<\infty$ and $\eps>0$ are fixed constants.
If $u:X\ra \R$ is a Lipschitz function, and $x\in X$ is an 
approximate continuity point for 
$\lip u:X\ra \R$, then 
there exists $r_0=r_0(u,x,A,\eps)>0$ such that if
$r \leq r_0$, $y,y'\in B(x,Ar) \subset X$ and $d(y,y')\leq r$, then
\begin{equation}
	\left |\av_B u-\av_{B'}u\right|\leq C_1r\,(\lip_x u + \eps),
\end{equation}
where $B\defeq B(y,r),\,B'\defeq B(y',r)$,
and where $C_1=C_1(X,\mu)<\infty$ is a suitable constant.
\end{lemma}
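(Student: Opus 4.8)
The plan is to compare both averages $\av_B u$ and $\av_{B'} u$ to the average over a single larger ball containing $B\cup B'$, apply the Poincar\'e inequality on that ball, and then use the approximate continuity of $\lip u$ at $x$, together with the fact that $x$ is a density point, to replace the resulting averaged $p$-th power of $\lip u$ by $(\lip_x u+\eps)^p$ up to a uniform factor.

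First I would introduce the common ball $\tilde B:=B(y,2r)$. Since $d(y,y')\leq r$, both $B=B(y,r)$ and $B'=B(y',r)$ lie in $\tilde B$, and by doubling (with constant $C$) we have $\mu(\tilde B)\leq C\mu(B)$ and $\mu(\tilde B)\leq C^2\mu(B')$. Writing $f_{\tilde B}=\av_{\tilde B}u$, I estimate
$$
\abs{\av_B u-f_{\tilde B}}\leq \av_B\abs{u-f_{\tilde B}}\,d\mu\leq \frac{\mu(\tilde B)}{\mu(B)}\,\av_{\tilde B}\abs{u-f_{\tilde B}}\,d\mu,
$$
and likewise for $B'$. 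Applying the $p$-Poincar\'e inequality (Definition~\ref{def-pidefinition}) to $\tilde B$, whose $L$-dilate is $B(y,2Lr)$, and summing the two bounds by the triangle inequality yields
$$
\abs{\av_B u-\av_{B'}u}\leq C_2\,r\left(\av_{B(y,2Lr)}(\lip u)^p\,d\mu\right)^{1/p},
$$
where $C_2=2L(C+C^2)$ depends only on $L$ and the doubling constant.

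The main step is to bound the averaged $p$-th power on $B(y,2Lr)$ by $(\lip_x u+\eps)^p$ up to a harmless factor. Set $M:=\LIP(u)$, so $\lip_z u\leq M$ for all $z$, and observe $B(y,2Lr)\subset B(x,R)$ with $R:=(A+2L)r$, since $y\in B(x,Ar)$. As $x$ is an approximate continuity point of $\lip u$, there is a measurable set $D$ having $x$ as a density point with $\lip u\restr_D$ continuous at $x$; hence for $r$ (equivalently $R$) small enough, $\lip_z u\leq \lip_x u+\eps$ for all $z\in D\cap B(x,R)$. Splitting the integral over $B(y,2Lr)$ into its intersection with $D$ and its complement, and using $(\lip u)^p\leq M^p$ on the complement, gives
$$
\av_{B(y,2Lr)}(\lip u)^p\,d\mu\leq (\lip_x u+\eps)^p+M^p\,\frac{\mu(B(x,R)\setminus D)}{\mu(B(y,2Lr))}.
$$
By doubling, $\mu(B(x,R))\leq C'\mu(B(y,2Lr))$ for some $C'=C'(A,L,C)$, so the error term is at most $M^pC'\,\mu(B(x,R)\setminus D)/\mu(B(x,R))$, which tends to $0$ as $r\to 0$ since $x$ is a density point of $D$. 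I then choose $r_0=r_0(u,x,A,\eps)$ so that for $r\leq r_0$ this error is at most $\eps^p\leq(\lip_x u+\eps)^p$, whence $\bigl(\av_{B(y,2Lr)}(\lip u)^p\,d\mu\bigr)^{1/p}\leq 2(\lip_x u+\eps)$.

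Combining the two displays proves the lemma with $C_1:=2C_2=4L(C+C^2)$, which depends only on $L$ and the doubling constant, hence only on $(X,\mu)$; all dependence on $A$ (and on $u,x,\eps$) has been absorbed into the threshold $r_0$. The only delicate point is this last step: one must exploit approximate continuity to control $\lip u$ on the good set $D$ while simultaneously using the density property and the doubling inequality to replace the off-center ball $B(y,2Lr)$ by the centered ball $B(x,R)$ on which density is measured, and to check that the final constant $C_1$ inherits no dependence on $A$.
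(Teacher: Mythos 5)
Your proposal is correct and follows essentially the same route as the paper: compare both averages to the average over the common ball $B(y,2r)$, apply the Poincar\'e inequality there, and use approximate continuity of $\lip u$ at $x$ (together with $\lip u\leq \LIP(u)$ and the density-point/doubling comparison) to bound the averaged $p$-th power by a multiple of $(\lip_x u+\eps)^p$ for small $r$. The paper states this last step in one line; you have simply filled in the details it leaves implicit, including the check that $C_1$ does not depend on $A$.
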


\begin{proof}
Set $\hat B\defeq B(y,2r)$, so $B,B'\subset \hat B$.
Then  we have 
\begin{equation}
\label{doublingpi}
C_2\left|\av_B u-\av_{B'}u\right| \leq \av_{\hat B}|u-u_{\hat B}|
\leq 2Lr\left(\av_{L\hat B} (\lip u)^p\right)^\frac{1}{p},
\end{equation}
where $C_2>0$ depends only on the doubling constant for $\mu$,
and the second inequality comes from the Poincar\'e inequality for 
$(X,\mu)$.
Since $\lip u \leq \LIP(u)$ everywhere,
and $x$ is an approximate continuity point of $\lip u$, when
$r$ is sufficiently small we have 
\begin{equation}
\label{2lipu}
\left(\av_{L\hat B}(\lip u)^p\right)^{\frac{1}{p}}\leq \lip_x u +\eps.
\end{equation}
Combining (\ref{doublingpi}) and (\ref{2lipu}) gives the lemma.
\end{proof}

\begin{proof}[Proof of Proposition~\ref{prop-pi-Liplip}]
Since $\mathrm{lip}f$ is Borel it is approximately continuous almost
everywhere.  Let $x\in X$ be an approximate continuity point for
$\lip f$, and fix $\la \in (0,1)$, $\eps \in (0,1)$.

Since $(X,\mu)$ is doubling, its completion $\bar X$ equipped with the measure
$\bar \mu$ defined  by $\bar \mu(Y)=\mu(Y\cap X)$ is also doubling.  If $\bar x\in \bar X$, 
$r\in (0,\infty)$, we may find $x\in B(\bar x,r)\cap X$.
Then using the doubling property of $\bar \mu$ and the $p$-Poincare
inequality for $X$, we get
$$
\av_{B(\bar x,r)}|f-f_{B(\bar x,r)}|d\bar \mu \leq C\av_{B(x,2r)}|f-f_{B(x,2r)}|d\bar \mu
$$
$$
\leq 2CLr\left(\av_{B(x,2Lr)}(\lip_xf)^pd\mu(x)\right)^{\frac1p}
\leq 2CLr\left(\av_{B(\bar x,(2L+1)r)}(\lip_xf)^pd\bar\mu(x)\right)^{\frac1p}
$$
where $C$ depends only on the doubling constant of $\mu$.
Hence $(\bar X,\bar\mu)$ also satisfies a $p$-Poincare inequality, and is
quasiconvex by Theorem~\ref{thm-quasiconvexity}.  Therefore,
given $r > 0$ and $y\in B(x,r)$, by the quasiconvexity
of $\bar X$,
 there is a chain of points $x=p_1,\ldots,p_k=y$ in $X$,
where $d(p_i,p_{i+1})\leq \la r$ and $k\leq \frac{Q}{\la}$,
for some $Q$ that depends only on $X$.  Set $B_i\defeq B(p_i,\la r)$.
Then 
\begin{multline}\label{ballchain}
	|f(y)-f(x)| \leq \\ \left| f(x)-\av_{B_1}f \right|+
		\sum_{1\leq i<k}\left|\av_{B_{i+1}}f
-\av_{B_i}f\right| +
		\left| \left(\av_{B_k}f\right)-f(y)\right|.
\end{multline}
By the lemma, when $r$ is sufficiently small we have
$$
\left| \av_{B_{i+1}}f-\av_{B_i}f\right|\leq C_1\la r (\lip_x f + \eps),
$$
so
\begin{align*}
	|f(y)-f(x)| & \leq \left(\frac{Q}{\la}\right)(C_1\la r (\lip_x f+\eps))
		+2\la r \LIP(f) \\
	& = \left(QC_1 (\lip_x f + \eps)+2\la \LIP(f)\right)r.
\end{align*}
Thus $\Lip_x f \leq QC_1 \lip_xf+QC_1\eps+2\la \LIP(f)$ and, since
$\la,\eps>0$ were arbitrary, this proves the proposition.
\end{proof}

\appendix

\section{A Poincar\'e inequality implies quasiconvexity}
\label{sec-appendix}

As mentioned in the introduction, in this appendix we give a 
simpler proof of the following theorem of Semmes
\cite[Appendix A]{Ch99}.
A similar argument can be found in~\cite[Section 6]{KePI}.

\begin{theorem}
\label{thm-quasiconvexity}
Let $(X,d,\mu)$ be a complete, doubling metric measure space
satisfying a Poincar\'e inequality.  Then $X$ is $\la$-quasiconvex,
where $\la$ depends only on the data (doubling constant and
constants in the PI).
\end{theorem}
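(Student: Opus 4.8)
The plan is to reduce everything to a uniform bound on the lengths of discrete approximating chains, and then to convert such chains into a genuine rectifiable curve. Concretely, fix a scale $\delta>0$, call a finite sequence $x_0=p_0,\dots,p_k=z$ with $d(p_i,p_{i+1})<\delta$ a \emph{$\delta$-chain}, and let $u(z)=L_\delta(x_0,z)$ denote the infimum of $\sum_i d(p_i,p_{i+1})$ over all $\delta$-chains from a fixed basepoint $x_0$ to $z$. I will prove $L_\delta(x_0,z)\le C\,d(x_0,z)$ with $C$ depending only on the doubling and Poincar\'e constants, independent of $\delta$ and of the points. Granting this, the induced length metric $d_\ell$ satisfies $d\le d_\ell\le C\,d$; since a complete doubling space is proper, the uniform chain bound, together with an Arzel\`a--Ascoli/Hopf--Rinow argument for the induced length structure (see \cite{BBI}), produces a rectifiable curve from $x$ to $y$ of $d$-length at most $C\,d(x,y)$, which is exactly $C$-quasiconvexity. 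Before the main estimate I would record two structural facts about $u$: (i) extending a chain by one short step gives $|u(z)-u(z')|\le d(z,z')$ whenever $d(z,z')<\delta$, so $u$ is continuous and $\lip_z u\le 1$ for every $z$; and (ii) the set $\{u<\infty\}$ is non-empty, open and closed, hence all of $X$ once $X$ is known to be connected. Connectedness itself follows from the Poincar\'e inequality: a proper clopen splitting $X=A\sqcup B$ would make $\lip\chi_A\equiv0$, and applying \eqref{eq-pi} on a ball containing points of both $A$ and $B$ would force $\chi_A$ to be a.e.\ constant there, a contradiction.

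The heart is a telescoping estimate. Fix $y$ with $d(x_0,y)=R$, and for $M>0$ set $u_M=\min(u,M)$. Being bounded by $M$ and $1$-Lipschitz at scales below $\delta$, $u_M$ is globally $\max(1,M/\delta)$-Lipschitz, so $u_M\in\LIP(X)$ with $\lip_z u_M\le1$; hence \eqref{eq-pi} yields $\dashint_{B}|u_M-(u_M)_B|\le Lr$ for every ball $B=B(\cdot,r)$. Telescoping along the dyadic balls $B_j=B(x_0,2^{1-j}R)$ shrinking to $x_0$, each consecutive average difference is controlled by doubling together with this bound, $|(u_M)_{B_{j+1}}-(u_M)_{B_j}|\le C_D\,\dashint_{B_j}|u_M-(u_M)_{B_j}|\le C_D L\,2^{1-j}R$, and summing the geometric series gives $|u_M(x_0)-(u_M)_{B(x_0,2R)}|\le CR$ (the averages converge to $u_M(x_0)$ since $u_M$ is continuous). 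The identical telescoping at $y$ gives $|u_M(y)-(u_M)_{B(y,2R)}|\le CR$, and comparing the two scale-$2R$ averages through the common ball $B(x_0,3R)$, again by doubling and \eqref{eq-pi}, costs a further $O(R)$. Adding these, $u_M(y)\le CR$ with $C$ independent of both $M$ and $\delta$.

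Since $u(y)<\infty$ by connectedness, choosing $M=u(y)$ gives $u(y)=u_M(y)\le CR$, i.e.\ $L_\delta(x_0,y)\le C\,d(x_0,y)$ for every $\delta$ and every pair of points, completing the reduction. I expect the main obstacle to be not the telescoping computation, which is routine once the right object is fixed, but setting up that object cleanly and avoiding a circularity: the estimate bounds $u$ by $d$, so one must \emph{not} use quasiconvexity to establish that $u$ is finite or Lipschitz. Finiteness is instead extracted from connectedness (a consequence of the Poincar\'e inequality), and the global Lipschitzness needed to legitimately invoke \eqref{eq-pi} is obtained from the boundedness of the truncations $u_M$ rather than from any chaining bound. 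The remaining delicate point is the passage from the uniform chain bound back to an honest rectifiable curve, which is where completeness enters through properness and Hopf--Rinow.
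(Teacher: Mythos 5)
Your proof is essentially correct, but it takes a genuinely different route from the paper's. The paper applies the Poincar\'e inequality \emph{once}, on the single ball $B(p,\tfrac{5r}{4})$, to the truncated chain-distance function; because a single application only controls averages, it yields an $\eps$-path merely from $B(p,\tfrac r4)$ to $B(q,\tfrac r4)$, and the actual endpoints are then reached by an iterative gap-filling scheme (Lemma~\ref{lem-halfgap} applied recursively, with the gaps halving and the added lengths summing as a geometric series). You instead run the standard telescoping argument --- dyadic balls shrinking to \emph{both} endpoints plus one comparison through $B(x_0,3R)$, using continuity of $u_M$ to identify the limits of the averages with the pointwise values --- which upgrades the average bound to the pointwise bound $u_M(y)\leq CR$ and so eliminates both Lemma~\ref{lem-halfgap} and the gap-filling iteration. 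Both proofs share the essential trick of truncating $u$ to $\min(u,M)$ so that one never needs to know $u<\infty$ in advance (the paper's $u_A$ is ``locally $1$-Lipschitz,'' your $u_M$ is globally $\max(1,M/\delta)$-Lipschitz with $\lip u_M\leq 1$ --- same point), and both finish with an Arzel\`a--Ascoli argument on the discrete chains, stated at the same level of detail. Two small remarks. First, your connectedness detour is dispensable and slightly delicate as written: $\chi_A$ for a clopen splitting has $\lip\chi_A\equiv 0$ but need not lie in $\LIP(X)$ (disjoint clopen sets can be at distance zero), so \eqref{eq-pi} as stated does not directly apply to it; but you do not need finiteness of $u$ at all, since $u_M(y)\leq CR$ for every $M$ with $C$ independent of $M$ already forces $u(y)\leq CR$ upon taking $M>CR$. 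Second, the ``Hopf--Rinow for the induced length metric $d_\ell$'' framing glosses over the fact that $d_\ell=\lim_\delta L_\delta$ is not obviously a length metric (approximate midpoints of a $\delta$-chain control $L_\delta$, not $\sup_{\delta'}L_{\delta'}$); the direct Arzel\`a--Ascoli extraction from the $\tfrac1n$-chains parametrized by cumulative length, which you also mention and which the paper uses, is the safe version and is where completeness and properness actually enter.
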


The main step in the proof of Theorem \ref{thm-quasiconvexity}
is:
\begin{lemma}
\label{lem-halfgap}
There is a constant $C\in (0,\infty)$ such that if
$p,q\in X$, and $r=d(p,q)$, then there is a path
of length at most $C\,r$ from $B(p,\frac{r}{4})$ to 
$B(q,\frac{r}{4})$.
\end{lemma}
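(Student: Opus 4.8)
The plan is to apply the Poincar\'e inequality to a function that records the length of the shortest discrete ``chain'' from the ball $B(p,\frac r4)$, and to run a measure-theoretic dichotomy: either the region reachable by short chains already occupies a definite fraction of a large ball about $p$ (so in particular it reaches $B(q,\frac r4)$), or the reachable/unreachable split produces a function whose oscillation is too large to be compatible with the Poincar\'e inequality. Once short chains are produced at every scale, I pass to the limit to obtain an honest rectifiable curve.

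Concretely, fix a small scale $\eps>0$, set $A=B(p,\frac r4)$ and $A'=B(q,\frac r4)$, and define
\[
u_\eps(z)=\inf\Big\{\textstyle\sum_{i=0}^{n-1} d(z_i,z_{i+1}) : z_0\in A,\ z_n=z,\ d(z_i,z_{i+1})\le\eps\Big\},
\]
the infimal length of an $\eps$-chain from $A$ to $z$, with $u_\eps(z)=+\infty$ if no such chain exists. Appending one point to a chain gives $|u_\eps(z)-u_\eps(z')|\le d(z,z')$ whenever $d(z,z')\le\eps$, so $\lip_z u_\eps\le 1$ everywhere; more usefully, partitioning an arbitrary rectifiable curve into steps of size $\le\eps$ shows that the constant $1$ is an upper gradient of $u_\eps$ (any rectifiable curve stays inside a single $\eps$-chain component, so the inequality is never vacuous). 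Now put $t=Cr$ for a constant $C$ to be chosen and $v=\min(u_\eps,t)$; then $v$ is bounded, with upper gradient $g=\mathbf 1_{\{u_\eps\le t\}}\le 1$. Working on $\hat B=B(p,2r)$, which contains both $A$ and $A'$, I invoke the Poincar\'e inequality in its equivalent upper-gradient form (Definition~\ref{def-pidefinition}) to get
\[
\dashint_{\hat B}|v-v_{\hat B}|\,d\mu\le L\,(2r)\Big(\dashint_{L\hat B} g^p\,d\mu\Big)^{1/p}\le 2Lr.
\]

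For the dichotomy, fix $\eps$ and suppose toward a contradiction that $u_\eps\ge t$ everywhere on $A'$, so that $v\equiv 0$ on $A$ and $v\equiv t$ on $A'$. Since the radii of $A$, $A'$, and $\hat B$ differ only by bounded factors, doubling gives $\mu(A),\mu(A')\ge c_0\,\mu(\hat B)$ for some $c_0>0$ depending only on the doubling constant. Then $c_0 t\le v_{\hat B}\le(1-c_0)t$, whence $\dashint_{\hat B}|v-v_{\hat B}|\ge c_0^2 t$. Combining with the Poincar\'e bound forces $c_0^2\,Cr\le 2Lr$, which is impossible once $C>2L/c_0^2$. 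Therefore, for every $\eps$ there is a point $z\in A'$ with $u_\eps(z)<Cr$, i.e.\ an $\eps$-chain from $A$ to $A'$ of length less than $Cr$, with $C$ depending only on $L,p$ and the doubling constant. Finally I let $\eps\to 0$: each such chain lies in the fixed closed ball $\overline{B(p,(C+1)r)}$, which is compact by completeness together with the total boundedness supplied by doubling. Parametrizing the chains by cumulative length and applying an Arzel\`a--Ascoli argument (the jumps between consecutive vertices tend to $0$), I extract a subsequential uniform limit that is a genuine continuous curve joining $\overline A$ to $\overline{A'}$, whose length is at most $Cr$ by lower semicontinuity of length under uniform convergence.

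The main obstacle, in my view, is twofold and lies entirely in the two passages between ``discrete'' and ``continuous.'' First, one must justify carefully that the constant $1$ (cut off to the reachable region) is a legitimate upper gradient for the extended-real-valued, truncated chain-function $v$, so that the upper-gradient form of the Poincar\'e inequality genuinely applies; this is where the equivalence of the two formulations of the inequality is used. Second, and more delicate, is the limit $\eps\to 0$ producing an actual rectifiable curve from the $\eps$-chains: since we do not yet know the space is quasiconvex, there are no geodesics available to interpolate chain vertices, so the extraction of the limiting curve must rely essentially on the completeness hypothesis and on properness coming from doubling. The oscillation estimate against the Poincar\'e bound, by contrast, is a short and robust computation.
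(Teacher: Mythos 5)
Your proposal is correct and follows essentially the same route as the paper: define the truncated $\eps$-chain distance function from $B(p,\frac{r}{4})$, note that the constant $1$ is an upper gradient, apply the Poincar\'e inequality on a ball containing both $B(p,\frac{r}{4})$ and $B(q,\frac{r}{4})$ to force a chain of length at most $Cr$ into $B(q,\frac{r}{4})$, and then pass to the limit $\eps\to 0$ via an Arzel\`a--Ascoli argument. The only difference is that you spell out the oscillation/measure dichotomy that the paper leaves implicit in the sentence ``the Poincar\'e inequality applied to $u_A$ and $B(p,\frac{5r}{4})$ implies that $u_A$ is $\leq Cr$ somewhere in $B(q,\frac{r}{4})$,'' which is a welcome addition rather than a deviation.
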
 

Assuming the lemma, the proof goes as follows.  Pick 
$x,x'\in X$, and apply the lemma to obtain a path $\ga$
of length at most $Cd(x,x')$, 
such that ``total gap'' $d(x,\ga)+d(\ga,x')$ is at
most $\frac12 d(x,x')$.   Now apply the lemma to each
of the gaps, to get two new paths, and so on.
The total gap at each step is at most
half the total gap at the previous step, 
and the total additional path produced  is at most $C$-times
the gap left after the previous step.  The closure of the 
union of the resulting collection of paths contains a path
from $p$ to $q$ of length at most $2\,C\,d(x,x')$.

\bigskip
Before proving Lemma \ref{lem-halfgap}, we make the following 
definition:

\bigskip
\begin{definition}
\label{def-eps-path}
An {\em $\eps$-path} in a metric space $X$ is a 
sequence of points $x_0,\ldots,x_k\in X$ such that
$d(x_{i-1},x_i)<\eps$ for all $i\in\{1,\ldots,k\}$;
the {\em length} of the $\eps$-path is $\sum_{i}\;d(x_{i-1},x_i)$.
\end{definition}

To prove Lemma \ref{lem-halfgap}, we will show that for all  $\eps\in (0,\infty)$,
 there is an $\eps$-path from $B(p,\frac{r}{4})$ to 
$B(q,\frac{r}{4})$
of length at most $C\;d(p,q)$;
then a  variant of the 
Arzel\`a-Ascoli theorem applied to a sequence of discrete
paths implies
that there is a path of length at most $C\,d(p,q)$
from $B(p,\frac{r}{4})$ to $B(q,\frac{r}{4})$. 

Fix $\eps\in (0,\infty)$, and  
define  $u:X\ra [0,\infty]$ by setting
$u(x)$ equal to the infimal length of an $\eps$-path from 
$B(p,\frac{r}{4})$ to $x$.  For $A\in (0,\infty)$, let
$u_A\defeq \min(u,A)$.  Then $u_A$ is is a continuous function which
is locally $1$-Lipschitz; in particular the constant function
$\rho\equiv 1$ is an upper gradient for $u_A$.  The Poincar\'e
inequality applied to $u_A$ and $B(p,\frac{5r}{4})$ implies
that $u_A$ is $\leq C\,r$ somewhere in $B(q,\frac{r}{4})$,
where $C$ depends only on the data of $X$.
Since this is true for $A>Cr$, the desired $\eps$-path exists.
\qed

\bibliography{expodiff}

\begin{thebibliography}{Kei04b}

\bibitem[BBI01]{BBI}
D.~Burago, Y.~Burago, and S.~Ivanov.
\newblock {\em A course in metric geometry}, volume~33 of {\em Graduate Studies
  in Mathematics}.
\newblock American Mathematical Society, Providence, RI, 2001.

\bibitem[Che99]{Ch99}
J.~Cheeger.
\newblock Differentiability of {L}ipschitz functions on metric measure spaces.
\newblock {\em Geom. Funct. Anal.}, 9(3):428--517, 1999.

\bibitem[CM97]{coldingminicozzi}
T.~Colding and W.~P. Minicozzi, II.
\newblock Harmonic functions on manifolds.
\newblock {\em Ann. of Math. (2)}, 146(3):725--747, 1997.

\bibitem[Fed69]{Fed}
H.~Federer.
\newblock {\em Geometric measure theory}.
\newblock Die Grundlehren der mathematischen Wissenschaften, Band 153.
  Springer-Verlag New York Inc., New York, 1969.

\bibitem[Hei01]{Hein-lect}
J.~Heinonen.
\newblock {\em Lectures on analysis on metric spaces}.
\newblock Universitext. Springer-Verlag, New York, 2001.

\bibitem[Hei07]{Hein}
J.~Heinonen.
\newblock Nonsmooth calculus.
\newblock {\em Bull. Amer. Math. Soc. (N.S.)}, 44(2):163--232 (electronic),
  2007.

\bibitem[HK98]{HK98}
J.~Heinonen and P.~Koskela.
\newblock Quasiconformal maps in metric spaces with controlled geometry.
\newblock {\em Acta Math.}, 181(1):1--61, 1998.

\bibitem[Kei03]{KePI}
S.~Keith.
\newblock Modulus and the {P}oincar\'e inequality on metric measure spaces.
\newblock {\em Math. Z.}, 245(2):255--292, 2003.

\bibitem[Kei04a]{KeDS}
S.~Keith.
\newblock A differentiable structure for metric measure spaces.
\newblock {\em Adv. Math.}, 183(2):271--315, 2004.

\bibitem[Kei04b]{KeMeasPI}
Stephen Keith.
\newblock Measurable differentiable structures and the {P}oincar\'e inequality.
\newblock {\em Indiana Univ. Math. J.}, 53(4):1127--1150, 2004.

\bibitem[Laa00]{Laakso}
T.~J. Laakso.
\newblock Ahlfors {$Q$}-regular spaces with arbitrary {$Q>1$} admitting weak
  {P}oincar\'e inequality.
\newblock {\em Geom. Funct. Anal.}, 10(1):111--123, 2000.

\bibitem[Pan89]{Pansu}
P.~Pansu.
\newblock M\'etriques de {C}arnot-{C}arath\'eodory et quasiisom\'etries des
  espaces sym\'etriques de rang un.
\newblock {\em Ann. of Math. (2)}, 129(1):1--60, 1989.

\end{thebibliography}

\bibliographystyle{alpha}

\end{document}